\documentclass[11pt,reqno]{amsart}
\usepackage{amsfonts,amsmath}
\pagestyle{plain}
\newtheorem{thm}{Theorem}[section]
\newtheorem{proposition}[thm]{Proposition}
\newtheorem{corollary}[thm]{Corollary}
\newtheorem{lemma}[thm]{Lemma}

\usepackage{graphicx}
\usepackage{epstopdf}

\vspace{4ex}
\begin{document}
\title[Deformed Macdonald-Ruijsenaars operators and super Macdonald polynomials]
{Deformed Macdonald-Ruijsenaars operators and super Macdonald polynomials}
\author{A.N. Sergeev}
\address{Department of Mathematical Sciences,
Loughborough University, Loughborough LE11 3TU, UK and Steklov Institute of Mathematics,
Fontanka 27, St. Petersburg, 191023, Russia}
\email{A.N.Sergeev@lboro.ac.uk}
\author{A.P. Veselov}
\address{Department of Mathematical Sciences,
Loughborough University, Loughborough LE11 3TU, UK  and Landau
Institute for Theoretical Physics, Moscow, Russia}
\email{A.P.Veselov@lboro.ac.uk}
\maketitle

\begin{abstract}
It is shown that the deformed Macdonald-Ruijsenaars operators can be described as the restrictions on certain affine subvarieties of the usual Macdonald-Ruijsenaars operator in infinite number of variables. The ideals of these varieties are shown to be generated by the Macdonald polynomials related to Young diagrams with special geometry. The super Macdonald polynomials and their shifted version are introduced,  the combinatorial formulas for them are given.
\end{abstract}
\section{introduction}

In this paper we investigate the properties of the deformed Macdonald-Ruijsenaars (MR) operators introduced in \cite{SV}
\begin{equation}
\label{defMR} \mathcal M_{n,m,q,t}=\frac{1}{1-q}\sum_{i=1}^n
A_{i}(T_{q,x_{i}}-1)+\frac{1}{1-t}\sum_{j=1}^m B_{j}(T_{t,y_{j}}-1),
\end{equation}
where
$$ A_{i}=\prod_{k\ne i}^{n}
\frac{(x_{i}-tx_{k})}{(x_{i}-x_{k})}\prod_{j=1}^{m}
\frac{(x_{i}-qy_{j})}{(x_{i}-y_{j})}, \quad B_{j}=\prod_{i=1}^{n}
\frac{(y_{j}-tx_{i})}{(y_{j}-x_{i})}\prod_{l\ne j}^{m}
\frac{(y_{j}-qy_{l})}{(y_{j}-y_{l})}
$$
and
$T_{q,x_{i}},T_{t,y_{j}}$ are the "shift" operators:
$$(T_{q,x_{i}}f)(x_{1},\dots,x_{i},\dots,x_{n},y_{1},\dots,y_{m})=f(x_{1},\dots,qx_{i},\dots,x_{n},y_{1},\dots,y_{m})
$$ $$
(T_{t,y_{j}}f)(x_{1},\dots,x_{n},y_{1},\dots,y_{j},\dots,y_{m})=f(x_{1},\dots,x_{n},y_{1},\dots,ty_{j},\dots,y_{m}).
$$
More precisely, we generalise the results of our paper \cite{SV2} by showing that the deformed MR operator can be described as the restriction of the usual Macdonald-Ruijsenaars operator \cite{R, Ma}
\begin{equation}
\label{MRinf}
 {\mathcal M}_{q,t}=\frac{1}{1-q}\sum_{i \geq 1}
\prod_{j\ne
i}\frac{z_{i}-t z_{j}}{z_{i}-z_{j}}\left(T_{q,z_{i}}-1\right)
\end{equation}
for infinite number of variables $z_i$ onto certain subvarieties $\Delta(n,m,q,t)$.

As well as our previous paper \cite{SV2} this work is based on the theory of
Macdonald polynomials \cite{Ma} and shifted Macdonald polynomials developed by Knop, Sahi and Okounkov \cite{KSa, K, Sa, Ok}.
The paper \cite{FJMM} by B. Feigin, Jimbo, Miwa and Mukhin was very useful for us in understanding the role of special parameters in this problem. Another important relevant work is the paper \cite{Cha} by Chalykh, who used a different technique to derive and investigate the deformed MR operator in the case  $m=1,$ which was the first case when the deformed Calogero-Moser systems were discovered (see \cite{CFV}).

The structure of the paper is following. First we review some basic facts from the theory of Macdonald polynomials and Cherednik-Dunkl operators.
The main results about deformed MR operators are proved in section 5. 
We introduce the super Macdonald polynomials as the restriction of the usual Macdonald polynomials on $\Delta(n,m,q,t).$ In section 6 we define their shifted versions and show that for any shifted super Macdonald polynomial there exists a difference operator commuting with $\mathcal M_{n,m,q,t}$  (a deformed version of Harish-Chandra homomorphism). 
In the last section we present some combinatorial formulas for the
super Macdonald polynomials and their shifted versions generalising Okounkov's result \cite{Ok}.

\section{ Symmetric functions and Macdonald polynomials}

In this section we recall some general facts about symmetric
functions and Macdonald polynomials mainly following classical Macdonald's
book \cite{Ma}. It will be convenient for us to use instead of
the parameters $q,t$ in Macdonald's notations of Macdonald
polynomials the parameters $q,t^{-1}$.

Let $P_{N}={\mathbb C}[x_{1},\dots,x_{N}]$ be the polynomial
algebra in $N$ independent variables and $\Lambda_{N}\subset
P_{N}$ be the subalgebra of symmetric polynomials.

A {\it partition} is any sequence $$
\lambda=(\lambda_{1},\lambda_{2},\dots,\lambda_{r}\,\dots) $$ of
nonnegative integers in decreasing order $$
\lambda_{1}\ge\lambda_{2}\ge\dots\ge\lambda_{r}\,\ge\dots $$
containing only finitely many nonzero terms. The number of nonzero
terms in $\lambda$ is the {\it length} of $\lambda$ denoted by
$l(\lambda)$. The sum $\mid\lambda\mid =
\lambda_{1}+\lambda_{2}+\dots$ is called the {\it weight} of
$\lambda$. The set of all partitions of weight $N$ is denoted by
${\mathcal P}_N.$

On this set there is a natural involution: in the standard
diagrammatic representation \cite{Ma} it corresponds to the
transposition (reflection in the main diagonal). The image of a
partition $\lambda$ under this involution is called the {\it
conjugate} of $\lambda$ and denoted by $\lambda'.$ This involution
will play an essential role in our paper.

Partitions can be used to label the bases in the symmetric algebra
$\Lambda_{N}.$ There are the following two standard bases in $\Lambda_{N}$,
which we are going to use:
{\it monomial symmetric polynomials} $m_{\lambda}, \lambda \in
{\mathcal P}_N,$ which are defined by $$
m_{\lambda}(x_{1},\dots,x_{N})=\sum
x_{1}^{a_{1}}x_{2}^{a_{2}}\dots x_{N}^{a_{N}}$$ summed over all
distinct permutations $a$ of $\lambda =
(\lambda_{1},\lambda_{2},\dots,\lambda_{N})$
and {\it power sums}
$$p_{\lambda}=p_{\lambda_{1}}p_{\lambda_{2}}\dots p_{\lambda_N}$$
where
$$ p_{k}=x_{1}^{k}+x_{2}^{k}+\dots + x_N^k.$$ 
It is well-known \cite{Ma} that each of these sets of functions
with $l(\lambda)\le N$ form a basis in $\Lambda_{N}.$

We will need the following infinite dimensional versions of both
$P_{N}$ and $\Lambda_{N}$. Let $M\le N$ and $\varphi_{N,M} :
P_{N}\longrightarrow P_{M}$ be the homomorphism which sends each
of $x_{M+1},\dots,x_{N}$ to zero and other $x_{i}$ to themselves.
It is clear that $\varphi_{N,M} (\Lambda_{N})=\Lambda_{M}$ so we
can consider the inverse limits in the category of graded algebras
$$ P=\lim_{\longleftarrow}
P_{N},\quad\Lambda=\lim_{\longleftarrow} \Lambda_{N}.$$ This means
that $$ P=\oplus_{r=0}^{\infty} P^{r},\quad
P^{r}=\lim_{\longleftarrow}P_{N}^{r} $$ $$
\Lambda=\oplus_{r=0}^{\infty}
\Lambda^{r},\quad\Lambda^{r}=\lim_{\longleftarrow}\Lambda_{N}^{r}
$$ where $P_{N}^r, \Lambda_{N}^r$ are the homogeneous components
of $P_{N}, \Lambda_{N}$ of degree $r$. The elements of $\Lambda$
are called {\it symmetric functions.}
Since for any partition $\lambda$  $$
\varphi_{N,M}(m_{\lambda}(x_{1},\dots,x_{N}))=m_{\lambda}(x_{1},\dots,x_{M})$$
(and similarly for the power sums) we can define the
symmetric functions $m_{\lambda}, 
p_{\lambda}.$

Another important example of symmetric functions are {\it
Macdonald  polynomials} $P_{\lambda}(x,q,t)$. We give here their definition in the form
most suitable for us.

Recall that on the set of partitions ${\mathcal P}_N$ there is the
following {\it dominance partial ordering}: we write
$\mu\le\lambda$ if for all $i \geq 1$ $$
\mu_{1}+\mu_{2}+\dots+\mu_{i}\le\lambda_{1}+\lambda_{2}+\dots+\lambda_{i}.
$$

Consider the following {\it Macdonald-Ruijsenaars operator} (MR operator)
\begin{equation}
\label{MR}
 {\mathcal M}_{q,t}^{(N)}=\frac{1}{1-q}\sum_{i=1}^N
\prod_{j\ne
i}\frac{x_{i}-tx_{j}}{x_{i}-x_{j}}\left(T_{q,x_{i}}-1\right)
\end{equation}
where $T_{q,x_{i}}$ is the shift operator
$$
\left(T_{q,x_{i}}f\right)(x_{1},\dots ,x_{i},\dots,x_{N})=
f(x_{1},\dots ,qx_{i},\dots,x_{N})
$$
This operator is related to the operators $D_{N}^{1}$ and
$E_{N}$ from Macdonald's book \cite{Ma} by the simple formulas
$$
{\mathcal
M}_{q,t}^{(N)}=\frac{t^{N-1}}{1-q}D_{N}^{1}(q,t^{-1})-\frac{1-t^{N}}{(1-q)(1-t)}=
\frac{t^{-1}}{1-q} E_{N}(q,t^{-1}).
$$
Our choice of the additional coefficient $\frac{1}{1-q}$ in formula (\ref{MR}) was motivated by the symmetric form of the deformed operator (\ref{defMR}).
We should note also that the operator (\ref{MR}) is related in a simple way to the
trigonometric version of the operator $\hat S_1$ introduced by Ruijsenaars \cite{R}.

An important property of the MR operator is its {\it
stability} under the change of $N$: the following diagram is
commutative
$$
\begin{array}{ccc}
\Lambda_{N}&\stackrel{{\mathcal
M}_{q,t}^{(N)}}{\longrightarrow}&\Lambda_{N}
\\ \downarrow
\lefteqn{\varphi_{N,M}}& &\downarrow \lefteqn{\varphi_{N,M}}\\
\Lambda_{M}&\stackrel{{\mathcal
M}_{q,t}^{(M)}}{\longrightarrow}&\Lambda_{M} \\
\end{array}
$$ (see page 321 in \cite {Ma}). This allows us to
define the MR operator ${\mathcal M}_{q,t}$ on the space of
symmetric functions $\Lambda$ as the inverse limit of ${\mathcal
M}_{q,t}^{(N)}.$

Recall \cite{Ma} that  Macdonald polynomials $P_{\lambda}(x,q,t) \in \Lambda_{N}$ are uniquely defined for generic parameters $q,t$ and any
partition $\lambda$, $l(\lambda)\le N$ by the following properties:

1) $P_{\lambda}(x,q,t)= m_{\lambda}+\sum_{\mu<\lambda}
u_{\lambda\mu}m_{\mu}$, where $u_{\lambda\mu}= u_{\lambda\mu}(q,t)\in \mathbb C$

2) $ P_{\lambda}(x,q,t)$ is an eigenfunction of the Macdonald
operator ${\mathcal M}_{q,t}^{(N)}.$

Indeed the operator ${\mathcal
M}_{q,t}^{(N)}$ has an upper triangular matrix in the monomial
basis $m_{\mu}$: $$ {\mathcal
M}_{q,t}^{(N)}(m_{\lambda})=\sum_{\mu\le\lambda}
c_{\lambda\mu}m_{\mu},$$ where the coefficients $c_{\lambda\mu}$
can be described explicitly (see \cite{Ma}, page 321). In particular
$$
c_{\lambda,\lambda}=\frac{1}{1-q}\sum_{i=1}^N\left(q^{\lambda_{i}}-1\right)t^{i-1}.
$$


For generic parameters $q,\,t$ the coefficients $c_{\lambda,\lambda}\ne c_{\mu,\mu}$
for all $\lambda\ne\mu$ with $|\lambda| = |\mu|,$ so the operator ${\mathcal M}_{q,t}^{(N)}$
is diagonalisable.

We should note that the coefficient $u_{\lambda\mu}$ are rational functions of $q$ and $t,$
which have the singularities only if $q^a = t^b$ for some non-negative integers $a, b$ (not equal to zero simultaneously) \cite{Ma}. Such parameters are called {\it special}, the Macdonald polynomials are well-defined for all non-special values of parameters $q,t$.

From the stability of the Macdonald operators it follows that
$$\varphi_{N,M}(P_{\lambda}(x_{1},\dots,x_{N}))=P_{\lambda}(x_{1},\dots,x_{M})
$$ so we have correctly defined Macdonald symmetric functions
$P_{\lambda}(x,q,t)\in \Lambda$ which are the eigenfunctions of
the Macdonald operator ${\mathcal M}_{q,t}^{(N)}.$

\section{Shifted symmetric functions and shifted Macdonald polynomials.}

We discuss now the so-called {\it shifted Macdonald polynomials}
investigated by Knop, Sahi and Okounkov
\cite{KSa, K, Sa, Ok}.

Let us denote by $\Lambda _{N,t} $ the algebra of polynomials
$f(x_{1},\dots,x_{N})$ which are symmetric in the "shifted"
variables $ x_{i}t^{i-1}$. This algebra has the filtration by the
degree of polynomials: $$ (\Lambda_{N,t})_{0}\subset
(\Lambda_{N,t})_{1}\subset \dots\subset
(\Lambda_{N,t})_{r}\subset\dots $$ We have the following shifted
analog of power sums:
\begin{equation}
\label{ps} p^*_{r}(x_{1},\dots,x_{N},t)=\sum_{i=1}^N
\left(x_{i}^r-1\right)t^{r(i-1)}
\end{equation}
The polynomials $$
p^*_{\lambda}(x,t)=p^*_{\lambda_{1}}(x,t)p^*_{\lambda_{2}}(x,t)\dots,
$$ where
$\lambda=(\lambda_{1},\lambda_{2},\dots,\lambda_{r}\,\dots)$ are
partitions of length $l(\lambda) \leq N,$ form a basis in $\Lambda _{N,t}.$  
They are stable in the following sense.
Let $M\le N$ and $\varphi_{N,M}^{*} :
P_{N}\longrightarrow P_{M}$ be the homomorphism which sends each
of $x_{M+1},\dots,x_{N}$ to 1 and leaving the remaining $x_{i}$ the same.
Then $\varphi_{N,M}^{*}(p^*_{\lambda}(x_1, \dots, x_N))= p^*_{\lambda}(x_1, \dots, x_M).$
 Therefore $\varphi_{N,M}^{*}
(\Lambda_{N,t})=\Lambda_{M,t}$ and one can consider the inverse limit
$$\Lambda_{t} =\lim_{\longleftarrow} \Lambda_{N,t}$$ in the
category of filtered algebras: $$
\Lambda_{t}=\bigcup_{r=0}^{\infty}
(\Lambda_{t})_{r},\quad(\Lambda_{t})_{r}=\lim_{\longleftarrow}(\Lambda_{N,t})_{r}.
$$ The algebra $\Lambda_{t}$ is called the algebra of {\it
shifted symmetric functions} \cite{Sa,Ok}. Let us introduce the following
function on the set of partitions:
\begin{equation}
\label{H}
 H(\lambda,q,t)=t^{n(\lambda^{\prime})}q^{n(\lambda)}\prod_{s\in\lambda}\left(
 q^{a(s)+1}-t^{l(s)}\right).
 \end{equation}
Here  $a(s)$ and  $l(s)$ are  "arm" and "leg" lengths respectively of a box $s = (i,j) \in \lambda$, which are defined
$$
a(s)=\lambda_{i}-j,\quad\l(s)=\lambda_{j}^{\prime}-i
$$
and
$$
n(\lambda)=\sum_{i\geq 1}(i-1)\lambda_{i}.
$$

Recall (see \cite{K, Sa, Ok}) that  the {\it shifted Macdonald polynomial} $P_{\lambda}^{*}(x,q,t) \in \Lambda_{t}$ is a unique shifted symmetric function of degree $\deg
P_{\lambda}= |\lambda|$ satisfying the following property:
 $$
P_{\lambda}^{*}(q^{\lambda},q,t)= H(\lambda,q,t)$$
 and $P_{\lambda}^{*}(q^{\mu},q,t) = 0$ unless $\lambda \subseteq \mu$ (Extra Vanishing Condition).
Here and later throughout the paper by $P(q^{\lambda})$ for a partition
$\lambda=(\lambda_{1},\dots,\lambda_{N})$ we mean
$P(q^{\lambda_1},\dots,q^{\lambda_{N}}, 1, 1, \dots).$



We will need the following duality property of the shifted
Macdonald polynomials proved by Okounkov \cite{Ok} 
\begin{equation}
\label{dual}
P_{\lambda}^{*}(q^{\mu},q,t)=\frac{H(\lambda,q,t)}{H(\lambda',t,q)}
P_{\lambda'}^{*}(t^{\mu'},t,q).
\end{equation}
To show this consider the
following {\it conjugation homomorphism} (cf. \cite{SV2}):
\begin{equation}
\label{omega}
\left[(\omega_{q,t}^*(f))\right](t^{\lambda})=f(q^{\lambda^{\prime}})
\end{equation}
We claim that the conjugation homomorphism maps the
algebra of shifted symmetric functions $\Lambda_{t}$ into the
algebra $\Lambda_{q}.$
Indeed computing the sum
$$
\sum_{(i,j)\in\lambda}q^{i-1}t^{j-1}
$$
first along columns and then along the rows we come to the following equality 
\begin{equation}
\label{conjugate}
\frac{1}{1-q}\sum_{j\ge
1}\left(q^{\lambda_{j}^{\prime}}-1\right)t^{j-1}=
\frac{1}{1-t}\sum_{i\ge 1}\left(t^{\lambda_{i}}-1\right)q^{i-1},
\end{equation}
which is equivalent to 
$$
(\omega_{q,t}^*(p_{r}^*(x,t))=\frac{1-q^r}{1-t^r}p_{r}^*(x,q)
$$
with $r=1$. Replacing $q$ by
$q^{r}$ and $t$ by $t^r$ we have this formula for all $r.$ Now the claim follows from the fact that $p^*_{\lambda}(x,t)$ generate the algebra. 

Combining this
with the definition of the shifted Macdonald  polynomials we have
the duality property (\ref{dual}).

\section{Cherednik - Dunkl operators and Harish-Chandra homomorphism}

In this section we present the basic facts about Cherednik-Dunkl
operators. For more details we refer to \cite{Ch,KN}.

Consider the operators $T_{i}, i=1,\dots,N-1$
$$
T_{i}=1+\frac{x_{i}-tx_{i+1}}{x_{i}-x_{i+1}}(\sigma_{ii+1}-1)
$$
and
$$
\omega={\sigma}_{NN-1}\sigma_{N-1N-2}\dots\sigma_{21}T_{q,x_{1}},
$$
where $\sigma_{ij}$ is acting on the function
$f(x_{1},\dots,x_{N})$ by permutation of the $i$-th and $j$-th
coordinates.
 By {\it Cherednik-Dunkl operators} we will mean the
following difference operators
\begin{equation}
\label{CD}
 D_{i,N} = t^{1-N}T_{i}\dots T_{N-1}\omega T_{1}^{-1}\dots
 T_{i-1}^{-1},\quad i=1,\dots N.
\end{equation}

The first important property of the Cherednik-Dunkl operators is
that they commute with each other: $$[D_{i,N}, D_{j,N}] = 0.$$
This means that one can substitute them in any polynomial $P$ in
$N$ variables without ordering problems.

The second property is that if one does this for a shifted
symmetric polynomial $g \in \Lambda_{N,t}$ then the corresponding
operator $g(D_{1,N} \dots D_{N,N})$ leaves the algebra of
symmetric polynomials $\Lambda_N$ invariant: $$ g(D_{1,N} \dots
D_{N,N}): \Lambda_N \rightarrow \Lambda_N.$$ The restriction of
the operator $g(D_{1,N} \dots D_{N,N})$ on the algebra $\Lambda_N$
is given by some difference operator, which we will denote as
${\mathcal D}^g_{N,q,t}.$ 

One can check that if we apply this operation to the shifted
power sum $p^*_{1}(x_{1},\dots,x_{N},t)=\sum_{i=1}^N
\left(x_{i}-1\right)t^{i-1}$ we arrive (up to a factor $(1-q)^{-1}$) at Macdonald-Ruijsenaars operator
(\ref{MR}). Thus the operators ${\mathcal D}^g_{N, q,t}$ can be considered as the integrals of the corresponding quantum system, which is equivalent to the relativistic Calogero-Moser system introduced by Ruijsenaars \cite{R}. 

The Macdonald polynomials are the
joint eigenfunctions of all these operators: if
$P_{\lambda}(x,q,t)$ is the Macdonald polynomial corresponding a
partition $\lambda$ of weight $N$ then
\begin{equation} \label{LP}
{\mathcal D} ^g_{N,t} P_{\lambda}(x,q,t)=
g(q^{\lambda_{1}},q^{\lambda_{2}},\dots,q^{\lambda_{N}})P_{\lambda}(x,q,t)
\end{equation}

This allows us to define a homomorphism (which is actually a
monomorphism) $\chi: f \rightarrow {\mathcal D}^f_{N,t}$ from the
algebra $\Lambda_{t}$ to the algebra of difference operators. Let
us denote by ${\mathcal D}(N,t)$ the image of $\chi$. The inverse
homomorphism $$ \chi^{-1} : {\mathcal D}(N,t)\longrightarrow
\Lambda _{N,t} $$ is called the {\it Harish-Chandra isomorphism.}
It can be defined by the action on the Macdonald polynomials: the
image of ${\mathcal D} \in {\mathcal D}(N,t)$ is a polynomial $f =
f_{\mathcal D} \in \Lambda _{N,t}$ such that
$${\mathcal D} P_{\lambda}(x,q,t)=
f(q^{\lambda})P_{\lambda}(x,q,t).$$

One can check that the Cherednik-Dunkl operators are stable: the
diagram
 $$
\begin{array}{ccc}
P_{N}&\stackrel{D_{i,N}}{\longrightarrow}&P_{N} \\ \downarrow
\lefteqn{\varphi_{N,M}}& &\downarrow \lefteqn{\varphi_{N,M}}\\
P_{M}&\stackrel{D_{i,M}}{\longrightarrow}&P_{M} \\
\end{array}
$$ is commutative for all $M\le N$ and $i\ge 1$. Similarly for any
$f\in \Lambda_{N,t}$ and $g=\varphi_{N,M}^*(f), M \le N$ the
following diagram is commutative: $$
\begin{array}{ccc}
\Lambda_{N}&\stackrel{{\mathcal D}^f_{N,
t}}{\longrightarrow}&\Lambda_{N} \\ \downarrow
\lefteqn{\varphi_{N,M}}& &\downarrow \lefteqn{\varphi_{N,M}}\\
\Lambda_{M}&\stackrel{{\mathcal
D}^g_{M,t}}{\longrightarrow}&\Lambda_{M} \\
\end{array}
$$ This allows us to define for any shifted symmetric function
$f\in \Lambda_{t}$ a difference operator $${\mathcal D}^{f}_{t} :
\Lambda \longrightarrow \Lambda$$ and the infinite dimensional
version of the homomorphism $\chi.$ We will denote by ${\mathcal
D} (t)$ the image of this homomorphism.  The inverse
(Harish-Chandra) homomorphism $\chi^{-1}: {\mathcal D} (t)
\longrightarrow \Lambda_{t}$ can be described by the relation
$${\mathcal D}^{f}_{t} P_{\lambda}(x, q,t) = f(q^{\lambda})
P_{\lambda}(x, q,t),$$ where now $f \in \Lambda_{t}$ and $P_{\lambda}(x, q,t)$ are Macdonald polynomials.

\section{Deformed Macdonald-Ruijsenaars operator as a restriction}

The following algebra $\Lambda_{n,m,q,t}$ will play a central role
in our construction.
Let $P_{n,m}={\mathbb C}[x_{1},\dots,x_{n},y_{1},\dots,y_{m}]$ be
the polynomial algebra in $n+m$ independent variables. Then
$\Lambda_{n,m,q,t}\subset P_{n,m}$ is the subalgebra consisting of
polynomials which are symmetric in $x_{1},\dots,x_{n}$ and
$y_{1},\dots,y_{m}$ separately and satisfy the conditions
\begin{equation}
\label{cond0} T_{q,x_{i}}(f)=T_{t,y_{j}}(f)
\end{equation}
on each hyperplane  $x_{i}-y_{j}=0$ for all $i=1,\dots,n$ and
$j=1,\dots,m.$ 

Assume from now on that $t$ and $q$ {\it are not roots of unity} and consider the following {\it deformed Newton sums}
\begin{equation}
\label{defNewton}
 p_{r}(x,y, q,t)= \sum_{i=1}^n
{x_{i}^r}+\frac{1-q^r}{1-t^r}\sum_{j=1}^m {y_{j}^r},
\end{equation}
which obviously belong to $\Lambda_{n,m, q,t}$ for all
nonnegative integers $r$.

We will prove later that if  the parameters $q,t$ are non-special, then $\Lambda_{n,m; q,t}$ is generated by the deformed Newton sums $ p_{r}(x,y, q,t)$ (see Theorem \ref{defN} below), but now
let us start with the following result.

\begin{thm} \label{fingen}
The algebra $\Lambda_{n,m, q,t}$ is finitely
generated if and only if $t^i q^j \neq 1$ for all $1\le i\le n,\: 1\le j\le m.$
\end{thm}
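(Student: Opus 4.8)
The plan is to reduce finite generation to a module-finiteness statement and then to a concrete geometric condition on the common zeros of the deformed Newton sums. First note that all the defining conditions of $\Lambda_{n,m,q,t}$ are homogeneous, so it is a graded subalgebra (by total degree) of the algebra $B=\Lambda_n\otimes\Lambda_m$ of polynomials symmetric separately in the $x$'s and the $y$'s, and $R=\mathbb C[x_1,\dots,x_n,y_1,\dots,y_m]$ is module-finite over $B$. Let $C\subseteq\Lambda_{n,m,q,t}$ be the graded subalgebra generated by finitely many deformed Newton sums $p_r(x,y,q,t)$. The standard graded Nakayama argument says that $R$ is a finite module over such a $C$ precisely when the common zero locus $V=\{p_r(x,y,q,t)=0,\ r\ge 1\}\subseteq\mathbb C^{n+m}$ reduces to the origin; and since the radical of the ideal generated by all the $p_r$ is finitely generated, once $V=\{0\}$ finitely many $p_r$ already cut out $\{0\}$, so $C$ can indeed be taken finitely generated. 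Granting $V=\{0\}$, $B$ is a finite $C$-module; $C$ is Noetherian, hence $B$ is a Noetherian $C$-module and its $C$-submodule $\Lambda_{n,m,q,t}$ is a finite $C$-module, which by the Artin--Tate lemma makes $\Lambda_{n,m,q,t}$ a finitely generated algebra. Thus the \emph{if} direction is reduced to computing $V$.

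The key computation is the following reformulation of $V$. Since $q,t$ are not roots of unity, $1-t^r\ne0$, and clearing this denominator in $p_r(x,y,q,t)=0$ turns the system $r\ge1$ into the statement that the ordinary power sums of the multiset $\mathcal A=\{x_1,\dots,x_n,y_1,\dots,y_m\}$ agree, for every $r\ge1$, with those of $\mathcal B=\{tx_1,\dots,tx_n,qy_1,\dots,qy_m\}$; equivalently $\mathcal A=\mathcal B$ as multisets. At a point of $V$ with not all coordinates zero, the scaling map $x_i\mapsto tx_i$, $y_j\mapsto qy_j$ permutes the nonzero part of $\mathcal A$, and along any of its cycles the product of the scaling factors must equal $1$; if the cycle meets $i'$ of the $x$-slots and $j'$ of the $y$-slots this reads $t^{i'}q^{j'}=1$ with $i'+j'\ge1$, $0\le i'\le n$, $0\le j'\le m$. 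As $q,t$ are not roots of unity neither $i'$ nor $j'$ can vanish, so a nonzero point forces a relation $t^{i'}q^{j'}=1$ with $1\le i'\le n$, $1\le j'\le m$. Hence, when no such relation holds, $V=\{0\}$ and $\Lambda_{n,m,q,t}$ is finitely generated.

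For the converse, if $t^{i}q^{j}=1$ for some $1\le i\le n$, $1\le j\le m$, reversing the cycle construction produces an explicit one-parameter family of nonzero points of $V$, so $R$ is not module-finite over $C$. This by itself does \emph{not} prove non-finite-generation: a finitely generated subalgebra may have a positive-dimensional zero locus of its augmentation ideal (for instance $\mathbb C[x,xy]\subset\mathbb C[x,y]$). One must instead exhibit infinitely many algebra-indecomposables, i.e. show $\dim_{\mathbb C}(A_+/A_+^2)=\infty$ for $A=\Lambda_{n,m,q,t}$. The model is the rank-one resonant case $n=m=1$, $qt=1$, where a direct computation gives $A=\mathbb C\oplus(x-qy)\,\mathbb C[x,qy]$, so that $A_+/A_+^2\cong\mathbb C[x,qy]/(x-qy)$ is infinite-dimensional and $A$ needs a new generator (essentially $p_d(x,y,q,t)$) in every degree $d$.

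In general I would localize along the resonance subvariety $W\subset V$ produced above: one first shows that every element of $A_+$ vanishes on $W$, so that products in $A_+^2$ vanish there to second order, and then that the induced transverse-jet map $A_+/A_+^2\to\mathcal O(W)$ has infinite-dimensional image, by constructing for infinitely many degrees elements of $\Lambda_{n,m,q,t}$ with prescribed, linearly independent normal derivatives along $W$. Reducing a general resonance $t^iq^j=1$ to this analysis — either directly or by restricting to $i$ of the $x$-variables and $j$ of the $y$-variables so as to land in a primitive resonant algebra — is where the real work lies, and constructing enough genuinely new elements of $\Lambda_{n,m,q,t}$ with controlled behaviour transverse to $W$ is the step I expect to be the main obstacle.
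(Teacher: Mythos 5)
Your proof of the \emph{if} direction is essentially the paper's argument: the paper also reduces finite generation to the statement that the first $n+m$ deformed Newton sums have no common nonzero root, proves this by clearing the denominators $1-t^k$ and observing that the multiset $\{x_1,\dots,x_n,y_1,\dots,y_m\}$ must coincide with $\{tx_1,\dots,tx_n,qy_1,\dots,qy_m\}$, and then invokes general commutative algebra (module-finiteness of the polynomial ring over the subalgebra generated by the Newton sums) to conclude. Your cycle decomposition is a slightly sharper version of the paper's step of multiplying all nonzero coordinates together to get $t^{|S|}q^{|T|}=1$; both need, and both use, the hypothesis that $q,t$ are not roots of unity to rule out $|S|=0$ or $|T|=0$. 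This half of your argument is sound.

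The converse is where you have a genuine gap, and you correctly diagnose it yourself: non-module-finiteness of $R$ over $C$ does not preclude finite generation of $\Lambda_{n,m,q,t}$, and your proposed remedy --- showing $\dim_{\mathbb C}(A_+/A_+^2)=\infty$ by constructing elements with prescribed transverse jets along the resonance variety $W$ --- is left as "the main obstacle". The paper closes exactly this gap by a quotient rather than a jet computation: given $t^iq^j=1$, it restricts functions to the two-dimensional resonance plane, i.e.\ it defines $\Phi_{i,j}:\Lambda_{n,m,q,t}\to\mathbb C[u,v]$ by $f\mapsto f(u,t^{-1}u,\dots,t^{1-i}u,0,\dots,0,\,q^{j-1}tv,\dots,tv,0,\dots,0)$, shows that the image is precisely the algebra $\{\phi: \phi(u,u)=\mathrm{const}\}=\mathbb C+(u-v)\mathbb C[u,v]$ (the condition $\phi(u,u)=\phi(qu,qu)$ forces constancy on the diagonal because $q$ is not a root of unity), and uses that a homomorphic image of a finitely generated algebra is finitely generated. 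Note that $\mathbb C+(u-v)\mathbb C[u,v]$ is exactly your $n=m=1$ model, for which you already computed $A_+/A_+^2$ to be infinite-dimensional; so the missing idea is not a new computation but the reduction: push everything down to the two-variable quotient instead of working transversally to $W$ inside $\mathbb C^{n+m}$. What remains nontrivial in either formulation --- and is only asserted, not proved in detail, in the paper --- is the surjectivity of $\Phi_{i,j}$ onto that algebra (equivalently, in your language, producing enough elements of $\Lambda_{n,m,q,t}$ with independent restrictions to $W$); without some such production of elements the converse is not established, so as written your proposal does not yet prove the "only if" half.
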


\begin{proof}
Consider the subalgebra $P(k) = {\bf C}[p_1, ... , p_{n+m}]$
generated by the first $n+m$ deformed Newton sums
(\ref{defNewton}). We need the following result about common zeros
of these polynomials (cf. Proposition 4 in \cite{SV}).

\begin{lemma}
The system $$ \left\{
\begin{array}{rcl}
x_{1}+x_{2}+\dots+x_{n}+\frac{1-q}{1-t}(x_{n+1}+x_{n+2}+\dots+x_{n+m})=0\\
x_{1}^2+x_{2}^2+\dots+x_{n}^2+\frac{1-q^2}{1-t^2}(x_{n+1}^2+x_{n+2}^2+\dots+x_{n+m}^2)=0\\
\cdots\\
x_{1}^{n+m}+x_{2}^{n+m}+\dots+x_{n}^{n+m}+\frac{1-q^{n+m}}{1-t^{n+m}}(x_{n+1}^{n+m}+x_{n+2}^{n+m}+\dots+x_{n+m}^{n+m})=0\\
\end{array}
\right. $$ 
has a non-zero solution in ${\bf C}^{n+m}$ if and only if
$t^{i}q^{j}=1$ for some $1\le i\le n,\: 1\le j\le m$.
\end{lemma}

\begin{proof}
Let us multiply the $k$-th equation by $1-t^k$ and rewrite it as
$$x_1^k+\dots +x_n^k+x_{n+1}^k+\dots+x_{n+m}^k = (t x_1)^k+\dots +(t x_n)^k+(q x_{n+1})^k+\dots+(q x_{n+m})^k.$$
Since this is true for all $k=1,\dots, n+m$ this means that the set $$x_{1},\dots, x_{n}, x_{n+1}, \dots, x_{n+m}$$ coincides up to a permutation with the set $$t x_{1},\dots, t x_{n}, q x_{n+1}, \dots, q x_{n+m}.$$

Let us consider only nonzero elements $x_{i}, i\in S\subset [1,\dots,n]$ and nonzero elements $x_{n+j}, j\in T\subset [1,\dots,m]$ . Therefore
$$
\prod_{i\in S}x_{i}\prod_{j\in T}x_{n+j}=t^{|S|}q^{|T|}\prod_{i\in S}x_{i}\prod_{j\in T}x_{n+j}
$$
Therefore $t^{|S|}q^{|T|}=1$. 

Conversely suppose that $t^iq^j=1$ for some $1\le i\le n,\: 1\le j\le m$ and consider
$$
x_{1}=1,\, x_{2}=t^{-1},\dots,x_{i}=t^{1-i},\quad x_{n+1}=q^{-1}t^{1-i},\, \dots, x_{n+j}=q^{-j}t^{1-i}
$$
with other variables to be zero. Then it is easy to verify that it is a solution of the system.
\end{proof}

From lemma it follows that if  $t^{i}q^{j}\neq1$ for all $1\le i\le n,\: 1\le j\le m$ the
algebra of all polynomials on $V$ is a finitely generated module
over subalgebra $P(k).$ By a general result from commutative
algebra  \cite{Atiyah-Macdonald}
this implies that $\Lambda_{n,m; q,t}$ is finitely generated.

Conversely, assume that $t^{i}q^{j} = 1$ for some $1\le i\le n,\: 1\le j\le m$
and consider the following homomorphism
$$\Phi_{i,j}: \Lambda_{n,m; q,t} \rightarrow \mathbb C[u,v]$$ sending a polynomial
$f(x_1, \dots, x_n,\, y_1, \dots, y_m)$ into
$$\phi(u,v) = f (u, t^{-1}u, \dots, t^{1-i}u,0,\dots,0, \, q^{j-1}t v, q^{j-2}t v, \dots, t v, 0, \dots, 0).$$
One can check that the image $\phi$ of any $f \in \Lambda_{n,m; q,t}$ satisfies the condition
$\phi (u, u) = \phi(qu, qu)$ and therefore $\phi(u,u) = const$  since $q$ is not root of unity. Moreover one can show that any such function $\phi$ belongs to the image of $\Phi_{i,j}.$
The corresponding algebra consists of the polynomials of the form $\phi = (u-v)p(u,v) + c,$ which is not finitely generated (c.f. \cite{SV}, p. 274). This completes the proof of the theorem.
\end{proof}


Let us assume from now on that $q,t$ are generic.
Since algebra $\Lambda_{n,m, q,t}$ is finitely generated 
we can introduce an affine algebraic variety $$\Delta_{n,m, q,t} = Spec \,
\Lambda_{n,m, q,t}.$$ 

Consider the following embedding of $\Delta_{n,m; q,t}$ into infinite-dimensional Macdonald variety ${\mathcal M} = Spec \, \Lambda$ (cf. \cite{SV2}).
Recall that  $\Lambda$ is the algebra of symmetric
functions in infinite number of variables $z_{1},z_{2},\dots$,
which is freely generated by the powers sums $p_{r}(z)=z_{1}^r+z_{2}^r+\dots.$
Consider the following homomorphism $\varphi$ from $\Lambda$ to
$\Lambda_{n,m,q,t}$ uniquely determined by the relations $$ \varphi (p_{r}(z))=
p_{r}(x,y,q,t).
$$ 
For generic $q,t$ this homomorphism is surjective and thus defines an embedding
$\phi: \Delta_{n,m, q,t} \rightarrow  {\mathcal M}.$

We are going to show that the deformed MR operator (\ref{defMR}) is
the restriction of the usual MR operator on ${\mathcal M}$ onto the subvariety
$\Delta_{n,m, q,t}.$

We start with the following modification of Proposition 2.8 from paper \cite{Cha} by Chalykh.

\begin{proposition}\label{cha}
 The deformed MR operator preserves
the algebra $\Lambda_{n,m,q,t}$:
\begin{equation} \label{invar}
{\mathcal M}_{n,m,q,t}: \Lambda_{n,m,q,t} \rightarrow
\Lambda_{n,m,q,t}.
\end{equation}
\end{proposition}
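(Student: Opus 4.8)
The plan is to verify directly that for $f \in \Lambda_{n,m,q,t}$ the function $g = \mathcal M_{n,m,q,t} f$ again lies in $\Lambda_{n,m,q,t}$. Since the operator manifestly preserves polynomials and commutes with the separate permutations of the $x_i$ and of the $y_j$ (the coefficients $A_i$ and $B_j$ are permuted among themselves under such permutations, so the whole sum is invariant), the only genuine content is to check that $g$ satisfies the matching conditions $T_{q,x_i}(g) = T_{t,y_j}(g)$ on each hyperplane $x_i = y_j$. By symmetry it suffices to treat the single hyperplane $x_1 = y_1$, and to verify $(T_{q,x_1} - T_{t,y_1})(g) = 0$ there.

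The key observation is that $A_i$ and $B_j$ have simple poles along $x_i = y_j$, and these are the only singular coefficients relevant to the hyperplane $x_1 = y_1$. First I would isolate, within $g = \mathcal M_{n,m,q,t} f$, the two terms that are singular on $x_1 = y_1$, namely the $i=1$ term $\tfrac{1}{1-q}A_1(T_{q,x_1}-1)f$ and the $j=1$ term $\tfrac{1}{1-t}B_1(T_{t,y_1}-1)f$; all other terms have coefficients regular on $x_1 = y_1$ and so are harmless there. The factor in $A_1$ producing the pole is $\tfrac{x_1 - q y_1}{x_1 - y_1}$, and in $B_1$ it is $\tfrac{y_1 - t x_1}{y_1 - x_1}$. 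I would compute the residues of these two terms along $x_1 = y_1$ and show the pole of $g$ cancels, using the hypothesis $T_{q,x_1}(f) = T_{t,y_1}(f)$ on $x_1 = y_1$ to match the shifted values of $f$ that appear; this is essentially the mechanism of Chalykh's Proposition~2.8 adapted to the symmetric two-family setup.

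Once regularity of $g$ on $x_1 = y_1$ is established, it remains to verify the matching condition itself, $(T_{q,x_1} g)|_{x_1 = y_1} = (T_{t,y_1} g)|_{x_1 = y_1}$. Here I would write out $T_{q,x_1} g$ and $T_{t,y_1} g$ term by term, restrict to $x_1 = y_1$, and organize the difference. The regular terms contribute expressions involving $T_{q,x_1}f$ and $T_{t,y_1}f$, which already agree on the hyperplane by hypothesis; the terms arising from the two shift operators $T_{q,x_1}$ and $T_{t,y_1}$ acting on the coefficients $A_i, B_j$ must be paired so that, after substituting $x_1 = y_1$ and repeatedly invoking the defining condition $T_{q,x_i}f = T_{t,y_j}f$ on each hyperplane, everything cancels. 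This bookkeeping is the main obstacle: the coefficients $A_i$ and $B_j$ are themselves shifted by the operators, so one must track how $T_{q,x_1}$ and $T_{t,y_1}$ move the variables $x_1$ and $y_1$ into and out of the various factors, and confirm that the hyperplane condition on $f$ is exactly what is needed at each cancellation. I expect no conceptual difficulty beyond this, but the algebra must be arranged carefully so that all the residue and matching identities close.
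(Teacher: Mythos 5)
Your overall strategy is the same as the paper's: reduce to the single hyperplane $x_1=y_1$ by bisymmetry, cancel the pole of the two singular terms there using the matching condition on $f$, and then verify $(T_{q,x_1}-T_{t,y_1})g=0$ on the hyperplane by pairing terms. Your treatment of the $x_1=y_1$ pole is correctly outlined. However, there is one concrete error at the start: the operator does \emph{not} ``manifestly preserve polynomials.'' The coefficients $A_i$ also have simple poles along $x_i=x_k$ (from the factors $\tfrac{x_i-tx_k}{x_i-x_k}$), and the $B_j$ along $y_j=y_l$, so $g$ is a priori only a rational function with possible poles there as well. These require their own cancellation argument --- the residues of $A_1$ and $A_2$ at $x_1=x_2$ are opposite, and $T_{q,x_1}f-f = T_{q,x_2}f-f$ on that hyperplane by the symmetry of $f$ --- which the paper carries out explicitly and your proposal omits. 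Your own text is internally inconsistent on this point: you first declare polynomiality manifest and say the ``only genuine content'' is the matching condition, and then devote a paragraph to cancelling a pole of $g$ along $x_1=y_1$.

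The second half is left as unexecuted ``bookkeeping,'' and here you have not identified the identities that actually make the cancellation close. On the hyperplane $x_1=y_1$ one has $(T_{q,x_1}-T_{t,y_1})A_i=0$ for $i\neq 1$ and $(T_{q,x_1}-T_{t,y_1})B_j=0$ for $j\neq 1$ (which, together with the hyperplane condition propagated through the shifts $T_{q,x_i}$, $T_{t,y_j}$ for $i,j\neq 1$, kills all terms except $i=1$ and $j=1$), while $T_{q,x_1}A_1=0$, $T_{t,y_1}B_1=0$ (each contains a factor vanishing after the shift), and crucially
\begin{equation*}
T_{t,y_1}\Bigl(\tfrac{A_1}{1-q}\Bigr)=T_{q,x_1}\Bigl(\tfrac{B_1}{1-t}\Bigr)\quad\text{on } x_1=y_1 .
\end{equation*}
This last identity, which couples the $x$- and $y$-parts of the operator and is where the specific form of $A_1$ and $B_1$ enters, is the heart of the proof; without naming it, the claim that ``everything cancels'' is an expectation rather than an argument. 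So the plan is the right one, but as written it contains a false assertion (manifest polynomiality) and leaves the essential step unverified.
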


\begin{proof}  Let $f\in\Lambda_{n,m,q,t}$ and
$g={\mathcal M}_{n,m,q,t}(f)$. Then we have
$$
g=\sum_{i=1}^n\frac{A_{i}}{1-q}f_{i}+\sum_{j=1}^m\frac{B_{j}}{1-t}f_{\bar
j },
$$
where $A_{i}, \, B_j$ are the same as in (\ref{defMR}) and 
$$
f_{i}=T_{q,x_{i}}(f)-f,\quad f_{\bar j}=T_{t,y_{j}}(f)-f,\quad
i=1,\dots,n,\,j=1,\dots,m.
$$
Let us prove first that $g$ is a polynomial. It is clear that $g$
is a rational function, which is symmetric in $x_{1},\dots,x_{n}$
and $y_{1},\dots,y_{m}$, so it is enough to prove that $g$ has no
singularities of type
$(x_{1}-x_{2})^{-1}$,$(y_{1}-y_{2})^{-1}$,$(x_{1}-y_{1})^{-1}$.

Let us represent $g$ in the form
$$
g=\frac{1}{x_{1}-x_{2}}\left((x_{1}-x_{2})\frac{A_{1}}{1-q}f_{1}+(x_{1}-x_{2})
\frac{A_{2}}{1-q}f_{2}\right)+g_{1},
$$
where $g_{1}$ is a rational function without singularities on the hyperplane
$x_{1} = x_{2}$. But  on this hyperplane we have
$$
(x_{1}-x_{2})\frac{A_{1}}{1-q}f_{1}+(x_{1}-x_{2})
\frac{A_{2}}{1-q}f_{2}=\left((x_{1}-x_{2})\frac{A_{1}}{1-q}+(x_{1}-x_{2})
\frac{A_{2}}{1-q}\right)f_{1}=0,
$$
so we see that $g$ has no poles when
$x_{1} = x_{2}$. Similarly there are no
singularities of type $(y_{1}-y_{2})^{-1}$. Now write $g$ in the form
$$
g=\frac{1}{x_{1}-y_{1}}\left((x_{1}-y_{1})\frac{A_{1}}{1-q}h_{1}+(x_{1}-y_{1})
\frac{B_{1}}{1-t}f_{\bar 1}\right)+g_{2},
$$
where $g_{2}$ is a rational function without poles when
$x_{1} = y_{1}$. On the hyperplane
$x_{1} = y_{1}$ we have
$$
(x_{1}-y_{1})\frac{A_{1}}{1-q}+(x_{1}-y_{1}) \frac{B_{1}}{1-t}=0,
$$
therefore
$$
(x_{1}-y_{1})\frac{A_{1}}{1-q}f_{1}+(x_{1}-y_{1})
\frac{B_{1}}{1-t}f_{\bar 1}=\left((x_{1}-y_{1})\frac{A_{1}}{1-q}+(x_{1}-y_{1})
\frac{B_{1}}{1-t}\right)f_{1}=0.
$$
We have used here that when $x_{1} = y_{1}$  we have $f_1 = f_{\bar 1}$ from the definition of algebra
$\Lambda_{n,m,q,t}$.
Thus we have proved that $g$ is a polynomial. 

Now let us prove that $g \in
\Lambda_{n,m,q,t}$. On the hyperplane $x_{1}=y_{1}$ we have the
following equalities:
$$
\left(T_{q,x_{1}}-T_{t,y_{1}}\right)A_{i}=0,\quad i\ne 1,\quad
T_{q,x_{1}}A_{1}=0,
$$
$$
\left(T_{q,x_{1}}-T_{t,y_{1}}\right)B_{j}=0,\quad j\ne 1 \quad
T_{t,y_{1}}B_{1}=0,
$$
$$
T_{t,y_{1}}\frac{A_{1}}{1-q}=T_{q,x_{1}}\frac{B_{1}}{1-t},
$$
and therefore
$$
\left(T_{q,x_{1}}-T_{t,y_{1}}\right)g=
\left(T_{q,x_{1}}-T_{t,y_{1}}\right)\left(\frac{A_{1}}{1-q}f_{1}+
\frac{B_{1}}{1-t}f_{\bar 1}\right)=
$$
$$
T_{q,x_{1}}\left(\frac{B_{1}}{1-t}f_{\bar 1}\right)-
T_{t,y_{1}}\left(\frac{A_{1}}{1-q}f_{1}\right)=
T_{q,x_{1}}\left(\frac{B_{1}}{1-t}\right)\left(T_{q,x_{1}}T_{t,y_{1}}(f)-
T_{q,x_{1}}f\right)-
$$
$$
T_{t,y_{1}}\left(\frac{A_{1}}{1-q}\right)\left(T_{t,y_{1}}T_{q,x_{1}}(f)-
T_{t,y_{1}}f\right)=0
$$
since $f \in \Lambda_{n,m,q,t}.$
\end{proof}

Now we are ready to formulate our central result. Let ${\mathcal
M}_{q,t}$ be the usual Macdonald  operator in infinite dimensions.

\begin{thm}
 The following diagram is commutative for
all values of the parameters $q,t$:
\begin{equation} \label{commdia}
\begin{array}{ccc}
\Lambda&\stackrel{{\mathcal M}_{q,t}}{\longrightarrow}&\Lambda
\\ \downarrow \lefteqn{\varphi}& &\downarrow \lefteqn{\varphi}\\
\Lambda_{n,m,q,t}&\stackrel{{\mathcal
M}_{n,m,q,t}}{\longrightarrow}&\Lambda_{n,m,q,t} \\
\end{array}
\end{equation}
In other words, the deformed MR operator (\ref{defMR}) is
the restriction of the operator ${\mathcal
M}_{q,t}$
onto the subvariety
$\Delta_{n,m, q,t} \subset \mathcal M.$
\end{thm}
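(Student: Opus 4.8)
The plan is to prove the sharper operator identity $\varphi\circ\mathcal M_{q,t}=\mathcal M_{n,m,q,t}\circ\varphi$ as maps $\Lambda\to\Lambda_{n,m,q,t}$; by Proposition~\ref{cha} both composites are well defined and land in $\Lambda_{n,m,q,t}$, and $\varphi$ is the surjection determined by $\varphi(p_r(z))=\sum_i x_i^r+\frac{1-q^r}{1-t^r}\sum_j y_j^r$. The key is to read $\varphi$ as the specialization of the $z$-alphabet to the \emph{super alphabet}
$$\mathcal A=\{x_1,\dots,x_n\}\ \sqcup\ \bigsqcup_{j=1}^m\Big(\{y_j t^k\}_{k\ge0}\ \ominus\ \{q\,y_j t^k\}_{k\ge0}\Big),$$
since $\sum_{k\ge0}(y_jt^k)^r-\sum_{k\ge0}(qy_jt^k)^r=\frac{1-q^r}{1-t^r}y_j^r$. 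Writing $P_j=\{y_jt^k\}_{k\ge0}$ for the positive part and $N_j=\{q\,y_jt^k\}_{k\ge0}$ for the subtracted part, the crucial structural fact is $N_j=q\,P_j$: the subtracted progression is exactly the $q$-shift of the positive one. The theorem then says that restricting the infinite MR operator $\mathcal M_{q,t}$ along this specialization reproduces (\ref{defMR}).

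To carry this out I would realize $\mathcal M_{q,t}$ as the stable limit of the finite operators $\mathcal M_{q,t}^{(N)}$ and compute the specialization term by term. For a variable $z_a=x_i$ the coefficient $\prod_{b\ne a}\frac{z_a-tz_b}{z_a-z_b}$ factors into the $x$--$x$ part $\prod_{k\ne i}\frac{x_i-tx_k}{x_i-x_k}$ and an $x$--$y$ part; evaluating the latter on $P_j\ominus N_j$ telescopes (consecutive factors cancel because $N_j=qP_j$) down to the single finite factor $\prod_j\frac{x_i-qy_j}{x_i-y_j}$, so the whole coefficient becomes $A_i$ while $T_{q,x_i}$ is untouched. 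For a progression variable $z_a=y_jt^k$ the shift $T_{q,z_a}$ moves $y_jt^k$ onto $qy_jt^k\in N_j$; since $N_j=qP_j$, summing these contributions over $k\ge0$ telescopes and only the boundary move at $k=0$ survives. That boundary move---deleting $y_j$ from $P_j$ and $qy_j$ from $N_j$---is precisely the effect of the single shift $T_{t,y_j}$ on the alphabet $\mathcal A$, and tracking its coefficient yields $\frac{1}{1-t}B_j$. Summing over $i$ and $j$ gives exactly $\mathcal M_{n,m,q,t}$.

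To make the term-by-term restriction rigorous without manipulating divergent expressions, I would first verify the identity on the generators $F=p_r$, where $\varphi(\mathcal M_{q,t}p_r)$ and $\mathcal M_{n,m,q,t}(\varphi(p_r))$ are both explicit and the required cancellation is a finite rational-function identity, and then promote it to all of $\Lambda$ using the Leibniz-type rule
$$\mathcal M_{q,t}(p_rF)=p_r\,\mathcal M_{q,t}(F)+(q^r-1)\textstyle\sum_i c_i z_i^r\,T_{q,z_i}(F),\qquad c_i=\tfrac{1}{1-q}\textstyle\prod_{j\ne i}\tfrac{z_i-tz_j}{z_i-z_j},$$
valid for first-order difference operators, which supports an induction on the number of power-sum factors once $\varphi$ is shown to intertwine the auxiliary operators appearing on the right. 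An alternative organization is to test the identity on the Macdonald basis $\{P_\lambda(z,q,t)\}$, on which $\mathcal M_{q,t}$ acts diagonally with eigenvalue $e_\lambda=\frac{1}{1-q}\sum_i(q^{\lambda_i}-1)t^{i-1}$; then it suffices to check that $\varphi(P_\lambda)$ is an eigenfunction of $\mathcal M_{n,m,q,t}$ with the same $e_\lambda$, which would follow from Proposition~\ref{cha} together with triangularity of $\mathcal M_{n,m,q,t}$ in a suitable partition-indexed basis and distinctness of the $e_\lambda$ for generic $q,t$.

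I expect the main obstacle to be exactly the cancellation between the positive progression $P_j$ and the subtracted progression $N_j=qP_j$: it is this identity that simultaneously telescopes the infinite products and sums into the finite coefficients $A_i,B_j$ and converts a $q$-shift along a $y$-progression into the single $t$-shift $T_{t,y_j}$ while producing the factor $\frac{1-q^r}{1-t^r}$. Organizing the computation so that these infinite cancellations appear as genuine finite identities---via the reduction to power sums or to the Macdonald basis above---rather than as formal limits is the delicate part; the remaining verifications are bookkeeping with rational functions that Proposition~\ref{cha} already guarantees to be polynomial.
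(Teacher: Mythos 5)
Your heuristic picture -- reading $\varphi$ as specialization of the $z$-alphabet to $\{x_i\}\sqcup\bigsqcup_j(\{y_jt^k\}\ominus\{qy_jt^k\})$, with the coefficients $A_i$ arising by telescoping and the $y$-progression shifts collapsing to the single boundary term $T_{t,y_j}$ (indeed, for $k\ge 1$ the coefficient of $T_{q,y_jt^k}$ contains the vanishing factor $\frac{y_jt^k-t\cdot y_jt^{k-1}}{y_jt^k-y_jt^{k-1}}$) -- is exactly the right intuition, and you correctly identify that the whole difficulty is turning this formal computation into finite identities. But neither of the two rigorization routes you offer actually closes, so as written there is a genuine gap. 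The Leibniz induction reduces the statement for $\mathcal M_{q,t}$ to the same kind of intertwining statement for the entire family $E_r=\sum_i c_iz_i^rT_{q,z_i}$; since $E_r(p_sF)=p_sE_r(F)+(q^s-1)E_{r+s}(F)$, the family is closed under the induction but never terminates, and for each $r$ you would have to define a deformed counterpart, prove it preserves $\Lambda_{n,m,q,t}$ (a new instance of Proposition~\ref{cha}), and prove the intertwining -- i.e.\ re-solve the original problem infinitely many times. The Macdonald-basis route is close to circular: to conclude that $\varphi(P_\lambda)$ is an eigenfunction of $\mathcal M_{n,m,q,t}$ with eigenvalue $e_\lambda$ you appeal to triangularity of $\mathcal M_{n,m,q,t}$ with diagonal entries $e_\lambda$ in a fat-hook-indexed basis, but the matching of those diagonal entries with the infinite-variable eigenvalues under the fat-hook correspondence (which encodes the conjugation identity (\ref{conjugate})) is essentially the content of the theorem and is nowhere independently established. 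Moreover the theorem is asserted for all $q,t$, whereas the basis argument lives at generic parameters and would need a separate rational-dependence argument to extend.

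The paper resolves precisely this difficulty by testing the operator identity not on power sums or on Macdonald polynomials but on the Cauchy kernel $\Pi=\prod_{j,r,i}\frac{1-t^{-1}q^rz_iw_j}{1-q^rz_iw_j}$. Macdonald's self-duality ${\mathcal M}^z_{q,t}\Pi={\mathcal M}^w_{q,t}\Pi$ transfers the action from the infinite $z$-alphabet to finitely many auxiliary variables $w$; the image $\varphi(\Pi)$ is computed in closed form (this is where your telescoping appears, but applied to the kernel, where it is a legitimate manipulation of a convergent product); and the desired commutation then reduces to the single finite partial-fraction identity (\ref{eq}) among rational functions of $x,y,w$. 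The promotion from $\Pi$ to all of $\Lambda$ is done in one stroke using the fact that the coefficients $g_\lambda(z,q,t)$ of $\Pi$ span $\Lambda$ as the number of $w$-variables grows. If you want to keep your framework, the quickest repair is the same device: verify your telescoped formula once, against the generating function $\Pi$, rather than attempting a term-by-term or basis-by-basis verification.
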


\begin{proof}
Let us introduce the following function
$\Pi \in \Lambda[[w_1, \dots, w_N]]$ which plays an important role
in the theory of Macdonald polynomials (see \cite{Ma}):
$$
\Pi = \prod_{j=1}^{N}\prod_{r = 0}^{\infty} \prod_{i=1} ^{\infty}
\frac{1-t^{-1}q^rz_{i}w_{j}}{1-q^rz_{i}w_{j}}
$$

\begin{lemma}\label{Pi}
The function $\Pi$ satisfies the following
properties:
\begin{enumerate}
\item[(i)]
\begin{equation}
\label{lemma2.1} {\mathcal M}^z_{q,t} \Pi={\mathcal M}^w_{q,t}
\Pi,
\end{equation} {\it where index $z$ (resp. $w$) indicates the
action of the Macdonald  operator ${\mathcal M}_{q,t}$ on $z$
(resp. $w$) variables}
\item[(ii)]
\begin{equation}
\label{lemma2.2} \varphi(\Pi)
=\prod_{r=0}^{\infty}\prod_{i=1}^n\prod_{l=1}^N
\frac{1-t^{-1}q^rx_{i}w_{l}}{1-q^rx_{i}w_{l}}
\prod_{j=1}^m\prod_{l}(1-t^{-1}y_{j}w_{l})
\end{equation}
\item[(iii)]
\begin{equation}
\label{lemma2.3} \varphi({\mathcal M}^z_{q,t} \Pi)={\mathcal
M}_{n,m,q,t} \varphi(\Pi)
\end{equation}
\end{enumerate}
\end{lemma}

\begin{proof} The first part is well known, see Macdonald \cite{Ma}, formula (3.12) in Chapter 6.

To prove the second one we note that since
$\varphi$ is a homomorphism it is enough to consider the case
$N=1$ when we have only one variable $w.$ 
Define the following automorphism 
$\sigma_{q,t}: \Lambda \rightarrow \Lambda$ by
\begin{equation}
\label{auto}
\sigma_{q,t}(p_{r}(y))=\frac{1-q^r}{1-t^r}p_{r}(y).
\end{equation}
We have (cf. \cite{SV2})
$$
\varphi(\Pi) = \prod_{r=0}^{\infty}\prod_{i=1}^n
\frac{1-t^{-1}q^rx_{i}w}{1-q^rx_{i}w} \sigma_{q,t}
\left(\prod_{r=0}^{\infty}\prod_{j\ge 1}
\frac{1-t^{-1}q^ry_{j}w}{1-q^ry_{j}w}\right)
$$
$$
= \prod_{r=0}^{\infty}\prod_{i=1}^n
\frac{1-t^{-1}q^rx_{i}w}{1-q^rx_{i}w} \sigma_{q,t}
\left(\exp\log\prod_{r=0}^{\infty}\prod_{j\ge 1}
\frac{1-t^{-1}q^ry_{j}w}{1-q^ry_{j}w}\right)
$$
$$
= \prod_{r=0}^{\infty}\prod_{i=1}^n
\frac{1-t^{-1}q^rx_{i}w}{1-q^rx_{i}w} \exp\sigma_{q,t}
\left(\log\prod_{r=0}^{\infty}\prod_{j\ge 1}
\frac{1-t^{-1}q^ry_{j}w}{1-q^ry_{j}w}\right)
$$
$$
=\prod_{r=0}^{\infty}\prod_{i=1}^n
\frac{1-t^{-1}q^rx_{i}w}{1-q^rx_{i}w} \exp\sigma_{q,t}
\left(\sum_{r=0}^{\infty}\sum_{j\ge 1}
\log(1-t^{-1}q^ry_{j}w)-\log(1-q^ry_{j}w)\right)
$$
$$
=\prod_{r=0}^{\infty}\prod_{i=1}^n
\frac{1-t^{-1}q^rx_{i}w}{1-q^rx_{i}w} \exp\sigma_{q,t}
\left(\sum_{r=0}^{\infty}\sum_{j\ge 1} \sum_{s\ge 1}
\frac{q^{rs}y_{j}^sw^s-t^{-s}q^{rs}y_{j}^sw^s}{s}\right)
$$
$$
=\prod_{r=0}^{\infty}\prod_{i=1}^n
\frac{1-t^{-1}q^rx_{i}w}{1-q^rx_{i}w} \exp\sigma_{q,t}
\left(\sum_{s\ge 1} \frac{1-t^{-s}}{1-q^s}p_{s}(y)\frac{w^s}{s}\right)
$$
$$
=\prod_{r=0}^{\infty}\prod_{i=1}^n
\frac{1-t^{-1}q^rx_{i}w}{1-q^rx_{i}w} \exp \left(\sum_{s\ge 1}
\frac{1-t^{-s}}{1-t^s}p_{s}(y)\frac{w^s}{s}\right)
$$
$$
=\prod_{r=0}^{\infty}\prod_{i=1}^n
\frac{1-t^{-1}q^rx_{i}w}{1-q^rx_{i}w} \exp \left(-\sum_{s\ge 1}
t^{-s}p_{s}(y)\frac{w^s}{s}\right)
$$
$$
=\prod_{r=0}^{\infty}\prod_{i=1}^n
\frac{1-t^{-1}q^rx_{i}w}{1-q^rx_{i}w}\prod_{j=1}^m(1-t^{-1}y_{j}w)
$$
This proves the second
part of the Lemma.

Let us prove $(iii)$. It is easy to check the following equalities
$$
\varphi(\Pi)^{-1}T_{q, w_{l}}\varphi(\Pi)=\prod_{i=1}^n
\frac{1-x_{i}w_{l}}{1-t^{-1}x_{i}w_{l}}\prod_{j=1}^m
\frac{1-t^{-1}q y_{j}w_{l}}{1-t^{-1}y_{j}w_{l}},
$$
$$
\varphi(\Pi)^{-1}T_{q,x_{i}}\varphi(\Pi)=\prod_{l=1}^N
\frac{1-x_{i}w_{l}}{1-t^{-1}x_{i}w_{l}},
$$
$$
\varphi(\Pi)^{-1}T_{q,y_{j}}\varphi(\Pi)=\prod_{l=1}^N
\frac{1-y_{j}w_{l}}{1-t^{-1}y_{j}w_{l}}.
$$
Now we see that $(iii)$ is equivalent to the following
equality
\begin{equation}
\label{eq}
\sum_{l=1}^N C_l \left(\prod_{i=1}^n
\frac{1-x_{i}w_{l}}{1-t^{-1}x_{i}w_{l}}\prod_{j=1}^m
\frac{1-t^{-1}q y_{j}w_{l}}{1-t^{-1}y_{j}w_{l}}-1\right)
\end{equation}
$$
=\sum_{i=1}^n A_i \left(\prod_{l=1}^N
\frac{1-x_{i}w_{l}}{1-t^{-1}x_{i}w_{l}}-1\right)
+\frac{1-q}{1-t}\sum_{j=1}^m B_j \left(\prod_{l=1}^N
\frac{1-t^{-1}q y_{j}w_{l}}{1-t^{-1}y_{j}w_{l}}-1\right),
$$
where
$$C_l = \prod_{k\ne
l}^N \frac{w_{l}-tw_{k}}{w_{l}-w_{k}},\,\,
A_i = \prod_{s\ne i}^n\frac{x_{i}-tx_{s}}{x_{i}-x_{s}}
\prod_{j=1}^m\frac{x_{i}-qy_{j}}{x_{i}-y_{j}},\,\,
B_j = \prod_{r\ne
j}^m\frac{y_{j}-qy_{r}}{y_{j}-y_{r}}
\prod_{i=1}^n\frac{y_{j}-tx_{i}}{y_{j}-x_{i}} .$$
From the partial fraction decomposition we have
$$
\prod_{i=1}^n \frac{1-x_{i}w_l}{1-t^{-1}x_{i}w_l}\prod_{j=1}^m
\frac{1-t^{-1}q y_{j}w_l}{1-t^{-1}y_{j}w_l}-1
$$
$$
=t^nq^m -1 + (1-t) \sum_{i=1}^n \frac{A_i}{1-t^{-1}x_i w_l}
+(1-q) \sum_{j=1}^m \frac{B_j}{1-t^{-1}y_j w_l},
$$
$$
\prod_{l=1}^N \frac{1-x_i w_l}{1-t^{-1}x_i w_l}-1=  t^N -1 + (1-t) \sum_{l=1}^N \frac{C_l}{1-t^{-1}x_i w_l},
$$
$$
\prod_{l=1}^N \frac{1-y_j w_l}{1-t^{-1}y_j w_l}-1=  t^N -1 + (1-t) \sum_{l=1}^N \frac{C_l}{1-t^{-1}y_j w_l}.
$$
Substituting these identities into relation (\ref{eq}) we reduce it to the following equality
$$
(t^nq^m-1) \sum_{l=1}^N C_l = (t^N-1)  \sum_{i=1}^n A_i +  \frac{1-q}{1-t} (t^N-1)  \sum_{j=1}^m B_j,$$
which follows from the identities
$$
\sum_{l=1}^N C_l= \frac{t^N-1}{t-1}, \quad \sum_{i=1}^n A_i +  \frac{1-q}{1-t} \sum_{j=1}^m B_j = \frac{t^n q^m -1}{t-1}.$$
 Lemma is proved.
\end{proof}

To complete the proof of the Theorem we need Macdonald's result that the
coefficients $g_{\lambda}(z,q,t)$ in the expansion of the function
$$
\Pi = \prod_{j=1}^{N}\prod_{r = 0}^{\infty} \prod_{i=1} ^{\infty}
\frac{1-t^{-1}q^rz_{i}w_{j}}{1-q^rz_{i}w_{j}} =
\sum_{\lambda}g_{\lambda}(z,q,t)m_{\lambda}(w)
$$
linearly generate $\Lambda$ when we increase the number of variables $w$ (see \cite{Ma}, VI, (2.10)).
\end{proof}


Let us introduce the set of partitions $H_{n,m},$ which consists
of the partitions $\lambda$ such that $\lambda_{n+1} \leq m$ or,
in other words, whose diagrams are contained in the {\it fat
$(n,m)$ - hook} (see Fig.1). Its complement we will denote as
$\bar H_{n,m}.$

\begin{center}
\includegraphics[width=7cm]{Fig1}
\end{center}

\begin{thm} 
\label{ker}
If $q,t$ are non-special then $Ker \varphi$ is
spanned by the Macdonald polynomials $P_{\lambda}(z,q,t)$
corresponding to the partitions which are not contained in the fat
$(n,m)$-hook.
\end{thm}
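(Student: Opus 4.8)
The plan is to read off the kernel from the explicit shape of $\varphi(\Pi)$ computed in Lemma \ref{Pi}, using Macdonald's Cauchy identity as the bridge. Recall that the kernel expands as $\Pi=\sum_{\lambda}P_{\lambda}(z,q,t)\,Q_{\lambda}(w,q,t)$ (see \cite{Ma}, VI), so the Macdonald polynomials $P_{\lambda}$ are \emph{exactly} the coefficient functions of $\Pi$ in the dual basis $Q_{\lambda}(w)$. Applying the homomorphism $\varphi$ in the $z$-variables therefore gives
$$
\varphi(\Pi)=\sum_{\lambda}\varphi\!\left(P_{\lambda}(z,q,t)\right)Q_{\lambda}(w,q,t),
$$
so that $\varphi(P_{\lambda})$ is precisely the coefficient of $Q_{\lambda}(w,q,t)$ in $\varphi(\Pi)$. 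Since for finitely many variables $w_{1},\dots,w_{N}$ the polynomials $Q_{\lambda}(w)$ with $l(\lambda)\le N$ are linearly independent, and we may take $N$ as large as we like in each fixed degree, the vanishing of $\varphi(P_{\lambda})$ is equivalent to the vanishing of the corresponding coefficient. Thus everything reduces to identifying the set of partitions $\lambda$ that actually occur in $\varphi(\Pi)$.

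Next I would determine this support from the factorized form in part (ii) of Lemma \ref{Pi}. The first factor $\prod_{r,i,l}\frac{1-t^{-1}q^{r}x_{i}w_{l}}{1-q^{r}x_{i}w_{l}}$ is the ordinary Macdonald Cauchy kernel in the $n$ variables $x$ and the $w$, hence expands as $\sum_{\mu}P_{\mu}(x,q,t)Q_{\mu}(w,q,t)$ with $l(\mu)\le n$, because $P_{\mu}(x_{1},\dots,x_{n})=0$ once $l(\mu)>n$. The second factor $\prod_{j,l}(1-t^{-1}y_{j}w_{l})$ is a dual Cauchy kernel in the $m$ variables $y$ and the $w$; by Macdonald's dual Cauchy identity (\cite{Ma}, VI.5) its $w$-expansion is supported on partitions with at most $m$ columns. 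Multiplying the two expansions and re-expanding the products of $w$-functions, every partition $\lambda$ that can appear is assembled from a shape with at most $n$ rows together with a shape of at most $m$ columns; by the geometry of the fat hook this forces $\lambda_{n+1}\le m$, i.e. $\lambda\in H_{n,m}$. (Equivalently one extracts the coefficient by the Macdonald scalar product, $\varphi(P_{\lambda})=\sum_{\mu}P_{\mu}(x)\langle K_{2},P_{\lambda/\mu}\rangle_{w}$ with $l(\mu)\le n$, and the skew pairing against the second factor vanishes unless $\lambda/\mu$ lies in a strip of $m$ columns.) This establishes $\varphi(P_{\lambda})=0$ for all $\lambda\notin H_{n,m}$, hence $\operatorname{span}\{P_{\lambda}:\lambda\in\bar H_{n,m}\}\subseteq\operatorname{Ker}\varphi$.

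For the reverse inclusion I would use that $\varphi$ is surjective and graded, so the images $\{\varphi(P_{\lambda}):\lambda\in H_{n,m}\}$ already span $\Lambda_{n,m,q,t}$; it then suffices to prove they are linearly independent. Here the commutative diagram (\ref{commdia}) is the decisive tool: since ${\mathcal M}_{q,t}P_{\lambda}=c_{\lambda}P_{\lambda}$ with $c_{\lambda}=\frac{1}{1-q}\sum_{i\ge 1}(q^{\lambda_{i}}-1)t^{i-1}$, each nonzero image $\varphi(P_{\lambda})$ is an eigenfunction of the deformed operator ${\mathcal M}_{n,m,q,t}$ with eigenvalue $c_{\lambda}$. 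As ${\mathcal M}_{n,m,q,t}$ preserves the finite-dimensional homogeneous components and the eigenvalues $c_{\lambda}$ are pairwise distinct for distinct partitions of a fixed weight when $q,t$ are generic, the nonzero images in each degree are automatically linearly independent. Combined with the vanishing already proved, the independence of the whole family $\{\varphi(P_{\lambda}):\lambda\in H_{n,m}\}$ is therefore equivalent to the single statement that \emph{none} of them vanishes.

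This nonvanishing is the main obstacle. I would settle it by a dimension count, proving that the graded dimension of $\Lambda_{n,m,q,t}$ in degree $N$ equals the number of partitions of weight $N$ contained in the fat hook $H_{n,m}$; for instance by exhibiting an explicit basis of $\Lambda_{n,m,q,t}$ (in the spirit of the deformed Newton sums of Theorem \ref{defN} and of the super-monomial functions of the final section). Once the two counts coincide, the spanning family of the correct cardinality, carrying pairwise distinct eigenvalues, must consist of nonzero and linearly independent vectors; together with the vanishing this yields $\operatorname{Ker}\varphi=\operatorname{span}\{P_{\lambda}:\lambda\in\bar H_{n,m}\}$, as asserted. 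An alternative way to secure the nonvanishing is to evaluate $\varphi(P_{\lambda})$ at a suitable super-principal specialization and obtain a hook-product formula that is nonzero exactly on $H_{n,m}$; this is the route one takes once the combinatorial formulas are at hand.
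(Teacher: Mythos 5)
Your forward inclusion is essentially sound and, once unpacked, coincides with the paper's: extracting the coefficient of $Q_{\lambda}(w)$ from $\varphi(\Pi)$ via the skew pairing produces exactly the expansion the paper derives directly from the skew formula $P_{\lambda}(x,y)=\sum_{\mu\subset\lambda}P_{\lambda/\mu}(x)P_{\mu}(y)$ together with the automorphism $\sigma_{q,t}$, namely
$\varphi(P_{\lambda})=\sum_{\mu\subset\lambda}(-1)^{|\mu|}P_{\lambda/\mu}(x,q,t)\,\frac{H(\mu,q,t)}{H(\mu',t,q)}\,P_{\mu'}(y,t,q)$,
and the vanishing for $\lambda\notin H_{n,m}$ is the same two-case analysis ($P_{\mu'}$ dies in $m$ variables, or the skew factor dies in $n$ variables by Macdonald VI (7.15)). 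Your route through $\Pi$ is fine but buys nothing extra, and your phrase ``by the geometry of the fat hook this forces $\lambda_{n+1}\le m$'' should be backed by the precise finite-variable vanishing statement for skew Macdonald polynomials rather than by support properties of the structure constants, which are less elementary.

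The genuine gap is in the reverse inclusion. Your eigenvalue argument correctly reduces everything to the single claim that $\varphi(P_{\lambda})\neq 0$ for all $\lambda\in H_{n,m}$, but you then leave that claim unproved. The dimension-count route you propose is circular in the logic of this paper: the Lemma preceding Theorem \ref{defN} only gives the \emph{upper} bound $\dim(\Lambda_{n,m,q,t})_{N}\le\#\{\lambda\in H_{n,m}:|\lambda|=N\}$, and both the matching lower bound and the surjectivity of $\varphi$ are themselves deduced from the nonvanishing and independence of the $SP_{\lambda}$ --- precisely what you are trying to establish. The specialization route is only gestured at. The paper closes this gap with one short computation that your proposal omits: from the displayed expansion of $\varphi(P_{\lambda})$, the lexicographically leading monomial is
$x_{1}^{\lambda_{1}}\cdots x_{n}^{\lambda_{n}}y_{1}^{\mu'_{1}}\cdots y_{m}^{\mu'_{m}}$ with $\mu=(\lambda_{n+1},\lambda_{n+2},\dots)$, carrying the explicit nonzero coefficient $(-1)^{|\mu|}H(\mu,q,t)/H(\mu',t,q)$; distinct $\lambda\in H_{n,m}$ give distinct leading monomials, so nonvanishing and linear independence follow at once (and for all non-special $q,t$, whereas your eigenvalue-separation step needs genericity). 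Without this, or an actually executed substitute such as a principal specialization formula, the proof is incomplete.
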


\begin{proof} 
Consider the automorphism 
$\sigma_{q,t}$ of algebra $\Lambda$ defined above by (\ref{auto}):
$$
\sigma_{q,t}(p_{r})=\frac{1-q^r}{1-t^r} p_{r}.
$$
Then using formula (6.19) from \cite{Ma} (see page 327) it is  easy to verify that
\begin{equation}
\label{macok}
 \sigma_{q,t}\left( P_{\lambda}(z,q,t)\right)=(-1)^{|\mu|}\frac{H(\lambda,q,t)}{H(\lambda^{\prime},t,q)}
 {P}_{\lambda'}(z,t,q).
\end{equation}
   Let now
$x=(x_{1},x_{2},\dots), y=(y_{1},y_{2},\dots,)$ be two infinite
sequences. Then we have (see \cite{Ma}, page 345,
formula $(7.9^{'})$)
\begin{equation}
\label{mac} P_\lambda(x,y,q,t)=\sum_{\mu\subset\lambda}
P_{\lambda/\mu}(x,q,t)P_{\mu}(y,q,t),
\end{equation}
where $P_{\lambda/\mu}(z,q,t)$ are the {\it skew Macdonald
functions} defined in \cite{Ma} (see Chapter 6, $\S$ 7) and
$\mu\subset\lambda$ means that $\mu_i \leq \lambda_i$ (or
equivalently the diagram of $\mu$ is a subset of the diagram of
$\lambda$).

If we apply this automorphism $\sigma_{q,t}$ acting in $y$
variables on both sides of the formula (\ref{mac}) and put all
the variables $x$ and $y$ except the first $n$ and $m$
respectively to zero we get
\begin{equation}
\label{Jack} \varphi (P_{\lambda}(z,q,t))=\sum_{\mu\subset\lambda}
(-1)^{\mid\mu\mid}P_{{\lambda} /{\mu}}(x,q,t)\frac{H(\mu,q,t)}
{H(\mu^{\prime},t,q)}P_{\mu^{\prime}}(y,t,q).
\end{equation}

Now let us assume that $\lambda$ is not contained in the fat
$(n,m)$-hook, then $\lambda'_{m+1} > n$. We have two
possibilities: $\mu'_{m+1} > 0$ or $\mu'_{m+1} = 0$. In the first
case we have $P_{\mu'}(y_{1},\dots, y_{m},t,q)=0$. In the
second case we have $\lambda'_{m+1} - \mu'_{m+1} > n$, so
according to \cite{Ma} (page 347, formula (7.15)) the skew
function $P_{{\lambda}/{\mu}}(x_{1},\dots,x_{n},q,t)=0$. Thus we
have shown that the Macdonald polynomials $P_{\lambda}(z,q,t)$
with $\lambda \in \bar H_{n,m}$ belong to the kernel of $\varphi.$

To prove that they actually generate the kernel let us consider
the image of the Macdonald polynomials $P_{\lambda}(z,q,t)$
with $\lambda \in  H_{n,m}.$ From the formula (\ref{Jack}) it
follows that the leading term in lexicographic order of $\varphi
(P_{\lambda}(z,q,t))$ has a form
 $$
(-1)^{\mid\mu\mid}\frac{H(\mu,q,t)}{H(\mu^{\prime},t,q)}{x_{1}}^{{\lambda}_{1}}
\dots {x_{n}}^{{\lambda}_{n}}{y_{1}}^{{\mu}_{1}^{\prime}}\dots
{y_{m}}^{{\mu}_{m}^{\prime}}
 $$
where $\mu =({\lambda}_{n+1}, {\lambda}_{n+2}, \dots )$.  From the
definition $\varphi (P_{\lambda}(z,q,t))\in \Lambda_{n,m,q,t}$. It
is clear that all these polynomials corresponding to the diagrams
contained in the fat hook are linearly independent.  The Theorem 
is proved.
\end{proof}

Note that we have also shown that for the generic $q,t$ the
restriction of the Macdonald polynomials on the subvariety $\Delta_{n,m, q,t} \subset \mathcal M$
\begin{equation}
\label{superJack}
SP_{\lambda}(x,y,q,t)=\varphi(P_{\lambda}(z,q,t)), \quad \lambda
\in H_{n,m}
\end{equation}
form a basis in ${\Lambda}_{n,m,q,t}$. By analogy with Jack polynomials case (see e.g. \cite{SV2}) we call $SP_{\lambda}(x,y,q,t)$ the {\it super Macdonald polynomials.}

\begin{corollary} 
\label{RES}
 Let $f \in \Lambda_{t}$ be a shifted
symmetric function and ${\mathcal M}^f_{q,t} = \chi(f) \in {\mathcal
D}_{q,t}$ be the corresponding difference operator commuting with MR operator ${\mathcal M}_{q,t}$. Then for generic $q,t$ there exists a difference operator
$M^f_{n,m,q,t}$ commuting with the deformed MR operator (\ref{defMR})
such that the following diagram is commutative
$$\begin{array}{ccc} \Lambda& \stackrel{{\mathcal
M}^f_{q,t}}{\longrightarrow}&\Lambda
\\ \downarrow \lefteqn{\varphi}& &\downarrow \lefteqn{\varphi}\\
\Lambda_{n,m,q,t}&\stackrel{{\mathcal M}^{f}_{n,m,q,t}}
{\longrightarrow}&\Lambda_{n,m,q,t} \\
\end{array}
$$ 
All the operators $M^f_{n,m,q,t}$ commute with each other and the super Macdonald polynomials (\ref{superJack}) are their joint
eigenfunctions.
\end{corollary}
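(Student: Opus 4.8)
The plan is to obtain $M^f_{n,m,q,t}$ as the operator induced by $\mathcal{M}^f_{q,t}=\chi(f)$ on the quotient $\Lambda/\operatorname{Ker}\varphi$, which the surjection $\varphi$ identifies with $\Lambda_{n,m,q,t}$. The crucial point is that $\operatorname{Ker}\varphi$ is invariant under $\mathcal{M}^f_{q,t}$. Indeed, by Theorem \ref{ker}, for non-special (hence for generic) $q,t$ the kernel is spanned by the Macdonald polynomials $P_\lambda(z,q,t)$ with $\lambda\in\bar H_{n,m}$, and each such polynomial is an eigenfunction, $\mathcal{M}^f_{q,t}P_\lambda=f(q^\lambda)P_\lambda$; hence $\mathcal{M}^f_{q,t}(\operatorname{Ker}\varphi)\subseteq\operatorname{Ker}\varphi$. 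Therefore the assignment $M^f_{n,m,q,t}\bigl(\varphi(h)\bigr):=\varphi\bigl(\mathcal{M}^f_{q,t}h\bigr)$ is well defined, independent of the choice of preimage $h\in\Lambda$, and yields a linear operator on $\Lambda_{n,m,q,t}$ for which the required diagram commutes; it is the unique such operator since $\varphi$ is onto.

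All the algebraic assertions then follow formally from the corresponding facts upstairs. Since $\chi\colon\Lambda_t\to\mathcal{D}(t)$ is an algebra homomorphism with commutative source, the operators $\mathcal{M}^f_{q,t}$ pairwise commute on $\Lambda$, and the MR operator $\mathcal{M}_{q,t}$ itself is (up to a scalar) $\chi(p^*_1)$ and so belongs to this commuting family. For $f,g\in\Lambda_t$ and any $u=\varphi(h)\in\Lambda_{n,m,q,t}$ one computes $M^f_{n,m,q,t}M^g_{n,m,q,t}u=\varphi(\mathcal{M}^f_{q,t}\mathcal{M}^g_{q,t}h)=\varphi(\mathcal{M}^g_{q,t}\mathcal{M}^f_{q,t}h)=M^g_{n,m,q,t}M^f_{n,m,q,t}u$, so all the descended operators commute; taking $g=p^*_1$ and invoking the commutative diagram (\ref{commdia}), which identifies the descent of $\mathcal{M}_{q,t}$ with the deformed MR operator (\ref{defMR}), shows in particular that each $M^f_{n,m,q,t}$ commutes with $\mathcal{M}_{n,m,q,t}$. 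Finally, for $\lambda\in H_{n,m}$ we get $M^f_{n,m,q,t}\,SP_\lambda=M^f_{n,m,q,t}\varphi(P_\lambda)=\varphi(\mathcal{M}^f_{q,t}P_\lambda)=f(q^\lambda)\,SP_\lambda$, so the super Macdonald polynomials (\ref{superJack}) are joint eigenfunctions with eigenvalues $f(q^\lambda)$.

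The step I expect to be the main obstacle is verifying that the abstract endomorphism $M^f_{n,m,q,t}$ is genuinely a \emph{difference} operator in the variables $x,y$, rather than merely a linear map. The natural route parallels the proof of the central theorem: one first establishes the self-duality $\mathcal{M}^{f,z}_{q,t}\Pi=\mathcal{M}^{f,w}_{q,t}\Pi$ of the commuting operators with respect to the reproducing kernel $\Pi$ (the case $f=p^*_1$ being Lemma \ref{Pi}(i)), then applies $\varphi$ in the $z$-variables and uses the factorized form of $\varphi(\Pi)$ from Lemma \ref{Pi} to exhibit a difference operator in $x,y$ reproducing the action of $\mathcal{M}^{f,w}_{q,t}$ on $\varphi(\Pi)$; since the coefficients of $\Pi$ span $\Lambda$, this difference operator must coincide with $M^f_{n,m,q,t}$. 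The difficulty is that, in contrast with the single MR operator where Lemma \ref{Pi} reduced to an explicit partial-fraction identity, for a general $f$ the corresponding identity is not transparent. An alternative, perhaps cleaner, strategy is to construct deformed Cherednik--Dunkl operators for the pair $(n,m)$ directly and define $M^f_{n,m,q,t}$ as the symmetric combination $\chi_{n,m}(f)$, then check its compatibility with $\varphi$ on generators; this avoids the generating-function computation at the cost of first developing the deformed analogue of the operators (\ref{CD}).
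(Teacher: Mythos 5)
Your proposal is correct and is essentially the argument the paper intends: the paper offers no explicit proof of this corollary, treating it as immediate from Theorem \ref{ker} via exactly your descent argument (the kernel of $\varphi$ is spanned by eigenfunctions $P_\lambda$, $\lambda\in\bar H_{n,m}$, of every $\mathcal M^f_{q,t}$, hence is invariant, so the operator passes to the quotient $\Lambda/\operatorname{Ker}\varphi\cong\Lambda_{n,m,q,t}$, and the commutation relations and eigenfunction property descend formally). The one point you flag as the main obstacle --- that the induced endomorphism is realized by a genuine difference operator in $x,y$ --- is a real subtlety that the paper also leaves implicit (it is only used later, in the Harish-Chandra theorem of Section 6, where the target $\Delta(n,m)$ is an algebra of difference operators), so your candid discussion of it is a strength rather than a defect of your write-up.
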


{\bf Remark.} For special $q,t$ of the form $q=t^k$ where $k$ is nonnegative integer the
homomorphism $\varphi$ can be passed through the finite dimension
$N = n+mk$:  $\varphi = \phi \circ \varphi_N,$ where $\varphi_N :
\Lambda \rightarrow \Lambda_N$ is the standard map (all $z_i$
except $N$ go to zero), and $\phi: \Lambda_N \rightarrow
\Lambda_{n,m,q,t}$ is a homomorphism such that
$$
\phi(z_{i})=x_{i},\: i=1,\dots,n,\quad\phi(z_{n+kl+j})=t^{j-1}y_{l},\, l=1,\dots,m,\; j=1,\dots, k.
$$
When $m=1$ the
corresponding ideal (the kernel of $\phi$) 
was investigated by B. Feigin, Jimbo, Miwa and Mukhin in
\cite{FJMM}, who also described it in terms of
Macdonald polynomials, but their description is much more
complicated then in generic parameters case. 
The case $k=2, m=2$ was studied in \cite{KMSV}.

We investigate the homomorphism: $f
\rightarrow M^f_{n,m,q,t}$ in more detail in the next section, but here we finish with the following result mentioned at the beginning of this section. 

Recall that the parameters $q,t$ are call special if $q^a = t^b$ for some non-negative integers $a, b$ not equal to zero simultaneously.

\begin{thm} \label{defN}
If the parameters $q,t$ are non-special  then
$\Lambda_{n,m,q,t}$ is generated by the deformed Newton sums
$ p_{r}(x,y,q,t).$
\end{thm}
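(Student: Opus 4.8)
The plan is to deduce the statement from the surjectivity of the homomorphism $\varphi\colon\Lambda\to\Lambda_{n,m,q,t}$. Indeed, $\Lambda$ is freely generated by the power sums $p_r(z)$, and $\varphi$ sends $p_r(z)$ to the deformed Newton sum $p_r(x,y,q,t)$; since $\varphi$ is an algebra homomorphism, its image is exactly the subalgebra of $\Lambda_{n,m,q,t}$ generated by the $p_r(x,y,q,t)$. Thus the claim that $\Lambda_{n,m,q,t}$ is generated by the deformed Newton sums is equivalent to the surjectivity of $\varphi$, and this is what I would establish for non-special $q,t$.

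To identify the image of $\varphi$ I would use Theorem~\ref{ker}. For non-special $q,t$ the Macdonald polynomials $P_\lambda(z,q,t)$ form a basis of $\Lambda$, and the theorem says that the kernel of $\varphi$ is spanned by those $P_\lambda$ with $\lambda\in\bar H_{n,m}$. Hence $\varphi(\Lambda)=\mathrm{span}\{SP_\lambda:\lambda\in H_{n,m}\}$, and surjectivity becomes the statement that the super Macdonald polynomials $SP_\lambda$, $\lambda\in H_{n,m}$, span $\Lambda_{n,m,q,t}$. Their linear independence for non-special $q,t$ is already contained in the proof of Theorem~\ref{ker}: by formula~(\ref{Jack}), $SP_\lambda$ has leading monomial $x_1^{\lambda_1}\cdots x_n^{\lambda_n}y_1^{\mu_1'}\cdots y_m^{\mu_m'}$, where $\mu=(\lambda_{n+1},\lambda_{n+2},\dots)$, with coefficient $(-1)^{|\mu|}H(\mu,q,t)/H(\mu',t,q)$, and this coefficient is nonzero precisely because for non-special parameters $H(\mu,q,t)$ and $H(\mu',t,q)$ are both nonzero. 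Distinct $\lambda$ give distinct leading monomials, so the $SP_\lambda$ are linearly independent, and consequently $\dim(\Lambda_{n,m,q,t})_d\ge N(d)$, where $N(d)$ is the number of $\lambda\in H_{n,m}$ with $|\lambda|=d$ and the grading is by total degree (preserved by the homogeneous defining conditions~(\ref{cond0})).

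It remains to prove the matching upper bound $\dim(\Lambda_{n,m,q,t})_d\le N(d)$, and I expect this to be the main obstacle. The leading monomials occurring among the $SP_\lambda$ are exactly the $x^\alpha y^\beta$ with $\alpha$ a partition of length at most $n$, $\beta$ a partition of length at most $m$, and $\alpha_n\ge l(\beta)$; so it suffices to show that the leading monomial of every $f\in\Lambda_{n,m,q,t}$ satisfies this \emph{fat hook} constraint, after which a routine leading-term reduction expresses $f$ through the $SP_\lambda$ and gives spanning. The delicate point is that this is a statement about the rank of the linear conditions~(\ref{cond0}) on the fixed space of bi-symmetric polynomials of degree $d$: that rank is only lower semicontinuous in $(q,t)$, so the dimension could a priori jump upward off the generic locus, and semicontinuity controls it only in the wrong direction. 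The real work is therefore to show that~(\ref{cond0}) retains maximal rank for all non-special $q,t$, which I would do directly by analysing the top-degree part of the equations $T_{q,x_i}f=T_{t,y_j}f$ on $x_i=y_j$: a monomial $x^\alpha y^\beta$ with $\alpha_n<l(\beta)$ cannot persist as a leading term unless the parameters satisfy a relation $q^a=t^b$, which is excluded for non-special $q,t$. Combining the two bounds, the $SP_\lambda$, $\lambda\in H_{n,m}$, form a basis of $\Lambda_{n,m,q,t}$; hence $\varphi$ is surjective and $\Lambda_{n,m,q,t}$ is generated by the deformed Newton sums.
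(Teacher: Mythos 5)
Your overall strategy coincides with the paper's: both arguments reduce Theorem \ref{defN} to a dimension count in each degree, obtain the lower bound from the super Macdonald polynomials $SP_{\lambda}$, $\lambda\in H_{n,m}$ (which lie in the subalgebra generated by the deformed Newton sums and are linearly independent because their lexicographic leading monomials $x_{1}^{\lambda_{1}}\cdots x_{n}^{\lambda_{n}}y_{1}^{\mu'_{1}}\cdots y_{m}^{\mu'_{m}}$ are distinct with leading coefficient $(-1)^{|\mu|}H(\mu,q,t)/H(\mu',t,q)\neq 0$ for non-special $q,t$), and correctly identify the matching upper bound on $\dim(\Lambda_{n,m,q,t})_{d}$ as the remaining content. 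The gap is that you do not actually prove that upper bound: you assert that a leading monomial violating the fat-hook constraint ``cannot persist unless $q^{a}=t^{b}$'' and propose to see this from ``the top-degree part'' of the conditions (\ref{cond0}). But those conditions are already degree-preserving, so there is no top-degree part to isolate, and more importantly they do not constrain a single leading monomial in isolation: writing $f=\sum c(\lambda,\mu)\,m_{\lambda}(x)m_{\mu}(y)$, the condition $T_{q,x_{i}}(f)=T_{t,y_{j}}(f)$ on $x_{i}=y_{j}$ translates into the coupled linear system $\sum_{a+b=p}(q^{a}-t^{b})\,c(\lambda\cup(a),\mu\cup(b))=0$ (equation (\ref{sys}) of the paper), which relates whole families of coefficients at once.

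The paper's proof of the upper bound is precisely this nontrivial elimination: it introduces a partial order on the pairs $(\lambda,\mu)$ outside the fat hook and shows that for each such irregular pair one can choose an equation of the system in which that pair is the unique maximal irregular pair and appears with the nonvanishing coefficient $q^{a}-t^{b}$ (this is where non-specialness enters), so that by induction every irregular coefficient is a linear combination of regular ones, giving $\dim(\Lambda_{n,m,q,t})_{d}\le N(d)$. Your remark that the rank of the conditions is semicontinuous in the wrong direction is a fair diagnosis of why a genericity argument cannot close this step and an explicit computation is required; but the computation itself --- which equation to use for which irregular pair, and why the chosen ordering makes the elimination terminate --- is exactly what is missing from your write-up. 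Until it is supplied, the spanning of the $SP_{\lambda}$, and hence the surjectivity of $\varphi$ and the theorem, is not established.
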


The main idea is standard for this kind of result (see e.g. \cite{SV}).
We show that the dimensions of the homogeneous components of the algebra $\Lambda_{n,m, q,t}$
and its subalgebra $\mathcal N_{n,m, q,t}$ generated by the deformed Newton sums are the same and thus these two algebras coincide. More precisely, we prove that these dimensions coincide with the number of the corresponding Young diagrams contained in the fat $(n,m)$-hook (see Fig. 1 above).


\begin{lemma} If $q,t$ are not special then
the dimension of the homogeneous component $\Lambda_{n,m; q,t}$ of
degree $N$ is less or equal than the number of  partitions
$\lambda$ of $N$ such that $\lambda_{n+1}\le m$.
\end{lemma}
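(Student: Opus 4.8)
The plan is to bound the dimension from above by estimating from below the rank of the linear map that imposes the defining relations (\ref{cond0}); this is independent of the homomorphism $\varphi$, which by Theorem \ref{ker} only gives the reverse inequality. Let $V$ be the space of degree-$N$ polynomials that are symmetric in $x_1,\dots,x_n$ and in $y_1,\dots,y_m$ separately, with basis $\{m_\alpha(x)m_\beta(y)\}$ indexed by pairs of partitions with $l(\alpha)\le n$, $l(\beta)\le m$. Since such polynomials are separately symmetric, every relation in (\ref{cond0}) is a permutation-image of the one on $x_1=y_1$, so $\Lambda_{n,m,q,t}^N=\ker\Phi$ where
\[
\Phi(f)=\bigl(T_{q,x_1}f-T_{t,y_1}f\bigr)\big|_{x_1=y_1}.
\]
Call a pair \emph{admissible} if $\alpha_n\ge l(\beta)$ and \emph{violating} otherwise. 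The assignment $\lambda_i=\alpha_i$ for $i\le n$ and $(\lambda_{n+1},\lambda_{n+2},\dots)=\beta'$ is a bijection from admissible pairs of weight $N$ onto $\{\lambda\vdash N:\lambda_{n+1}\le m\}$, since then $\lambda_{n+1}=l(\beta)\le m$ and $\lambda$ is a genuine partition exactly when $\alpha_n\ge l(\beta)$. As $\dim V=\#\{\text{admissible}\}+\#\{\text{violating}\}$ and $\dim\ker\Phi=\dim V-\operatorname{rank}\Phi$, the lemma is equivalent to $\operatorname{rank}\Phi\ge\#\{\text{violating}\}$, and for this it suffices to prove that the images $\Phi(m_\alpha(x)m_\beta(y))$ over the violating pairs are linearly independent.

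Writing $z:=x_1=y_1$, the operator acts on monomials by
\[
\Phi\bigl(x^\gamma y^\delta\bigr)=\bigl(q^{\gamma_1}-t^{\delta_1}\bigr)\,z^{\gamma_1+\delta_1}x_2^{\gamma_2}\cdots x_n^{\gamma_n}y_2^{\delta_2}\cdots y_m^{\delta_m},
\]
so $\Phi(m_\alpha m_\beta)$ is the sum of such terms over the rearrangements of $\alpha$ and $\beta$. This reveals the essential difficulty: $\Phi$ forgets how the leading $z$-degree $\alpha_1+\beta_1$ is divided between $x_1$ and $y_1$. Consequently a whole family of violating pairs — those with common lower parts $(\alpha_2,\dots,\alpha_n)$, $(\beta_2,\dots,\beta_m)$ and common sum $\alpha_1+\beta_1=:w$, differing only by writing $w=s+(w-s)$ — have images that at top $z$-degree are all proportional to the single monomial $z^{w}x_2^{\alpha_2}\cdots y_2^{\beta_2}\cdots$, with proportionality constants $q^{s}-t^{\,w-s}$. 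They cannot be told apart at top degree, so the plan is to separate them using the subleading terms: arrange the coefficients of $\Phi(m_{\alpha^{(s)}}m_{\beta^{(s)}})$ over the family into a square matrix of powers of $q,t$ and show its determinant vanishes only on the special locus $q^{a}=t^{b}$, which is excluded. An induction on $w$ (pairs of larger top-sum having been dealt with first) then assembles these family-wise independences into the full statement.

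The main obstacle is exactly this non-degeneracy. One must identify, for each collision family of violating pairs, enough target monomials to pin them down, reduce the resulting independence to an explicit determinant in the quantities $q^{s}$ and $t^{\,w-s}$, and verify that its vanishing forces a relation $q^{a}=t^{b}$ with $(a,b)\ne(0,0)$ — so that non-speciality is precisely what is needed. A convenient conceptual packaging is semicontinuity: $\dim\ker\Phi$ is upper semicontinuous in $(q,t)$ and equals $\#\{\lambda\vdash N:\lambda_{n+1}\le m\}$ for generic parameters, so the task reduces to showing that the locus where it jumps lies inside the special locus — again the same determinantal non-vanishing. The remaining ingredients, namely the reduction to $\Phi$, the monomial formula, and the bijection with fat-hook diagrams, are routine.
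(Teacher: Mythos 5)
Your reduction is correct and is in fact the same one the paper uses: expand $f$ in the basis $m_\alpha(x)m_\beta(y)$, observe that by separate symmetry the conditions (\ref{cond0}) all follow from the single relation on $x_1=y_1$, note that the resulting linear system consists of the equations
\[
\sum_{a+b=p}\bigl(q^{a}-t^{b}\bigr)\,c\bigl(\hat\alpha\cup(a),\hat\beta\cup(b)\bigr)=0,
\]
and check that the admissible pairs biject with the fat-hook partitions, so that the lemma amounts to a rank bound for this system. But the proposal stops exactly where the mathematical content begins. Everything from ``the plan is to separate them using the subleading terms'' onward is a description of what would have to be proved, not a proof: you never identify the target monomials that distinguish the members of a collision family, never write down the square matrix or its determinant, and never verify that its vanishing forces $q^{a}=t^{b}$ with $(a,b)\neq(0,0)$. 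Since this non-degeneracy is the \emph{only} point at which the hypothesis that $q,t$ are non-special enters, the omitted step is precisely the content of the lemma, and the argument as written does not establish it.

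For comparison, the paper closes this gap not by a determinant computation but by an elimination (triangularity) argument: it introduces a partial order on the irregular pairs --- built from the number $k$ of parts $\lambda_i<\mu'_1$, the tail sums $\mu(q)$, and $\lambda_n$ --- and for each irregular pair selects \emph{one} equation of the system (taking $\hat\lambda=(\lambda_1,\dots,\lambda_{n-1})$, $\hat\mu$ obtained from $\mu$ by deleting the part $\mu_{s-\lambda_n}$, and $p=\lambda_n+\mu_{s-\lambda_n}$) in which $c(\lambda,\mu)$ appears with the coefficient $q^{a}-t^{b}$, non-zero by non-speciality, and is maximal among all irregular pairs occurring there; induction on the order then expresses every irregular coefficient through admissible ones. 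To complete your version you would need to supply an equivalent device --- either such a choice of equations with a triangularity proof, or the explicit non-vanishing determinants you allude to. Note also that the semicontinuity fallback is circular as stated: the fact that the generic dimension equals the fat-hook count is essentially Theorem \ref{defN}, whose proof rests on this lemma; and in any case upper semicontinuity only says the jump locus is Zariski closed, not that it lies inside the special locus, so it does not spare you the non-degeneracy computation.
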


\begin{proof}
For a given partition $\nu$ consider the set of all different partitions $\hat\nu$, which one can get from $\nu$ by eliminating at most one part of it (or one row in the Young diagram representation).
We have the following obvious formula
\begin{equation}\label{mon}
m_{\nu}(x_{1},x_{2},\dots,x_{n})=\sum_{\hat\nu\cup(a)=\nu}x_{1}^{a}m_{\hat\nu}(x_{2},\dots,x_{n})
\end{equation}
where  $(a)$ denote the row of length $a.$ Any element $f\in \Lambda_{n,m,q,t}$ can be written in the form 
$$
f=\sum_{\lambda,\mu}c(\lambda,\mu)m_{\lambda}(x_{1},x_{2},\dots,x_{n})m_{\mu}(y_{1},y_{2},\dots,y_{n})
$$
Then from the equality (\ref{mon}) we have the linear system
\begin{equation}\label{sys}
\sum_{a+b=p}(q^{a}-t^b)c(\lambda\cup(a),\mu\cup(b))=0,
\end{equation}
where $a,b,p$ nonnegative integers and $\lambda,\mu$ are partitions such that $\lambda_{n}=0,\:\mu_{m}=0$.

Consider the set  $X_{N}(n,m)$ of pairs  $\lambda,\mu$ of partitions such that$ |\lambda|+|\mu|=N,\:\lambda_{n+1}=0,\:\mu_{m+1}=0$. Then we have disjoint union
$$
X_{N}(n,m)=D_{N}(n,m)\cup  R_{N}(n,m),
$$
where $D_{N}(n,m)$ is the subset  of pairs $(\lambda,\mu)$ of partitions such that $\lambda_{n+1}\le m.$ 
Note that the corresponding Young diagrams $\lambda\cup\mu'$, where $\mu'$ corresponds to the transposed Young diagram, are precisely those which are contained in the fat $(n,m)$-hook. 
We would like to show that one can express $c(\lambda,\mu)$ for all others partitions as a linear combinations of those from $D_{N}(n,m).$

Consider any $(\lambda,\mu)\in  R_{N}(n,m): \lambda = (\lambda_1, \dots, \lambda_n), \, \mu = (\mu_1, \dots, \mu_s).$ Define 
$$
k=\sharp\{i\:|\:\lambda_{i} < \mu^{\prime}_{1}\}
$$ 
and $$\mu(q)=\mu_{s-q+1}+\dots+\mu_{s}$$
for any integer $0 < q \le s.$
Introduce the following partial order on  $R_{N}(n,m)\::
(\lambda,\mu)\prec(\tilde\lambda,\tilde\mu)$ if and only if 
$k<\tilde k$ or one of the following conditions is fulfilled:

$k=\tilde k$ and for $q=min\{\lambda_n,\tilde \lambda_n\},\:\mu(q+1) <\tilde\mu(q+1)$ 
 
 $k=\tilde k$ and for $q=min\{\lambda_n,\tilde \lambda_n\},\:\mu(q+1) =\tilde\mu(q+1)$ and $\lambda_n < \tilde \lambda_n$.
We prove Lemma by induction in $N(x)=\sharp\{y\prec x\mid y\in R_{N}(n,m)\}$.
Let $(\lambda,\mu)\in R_{N}(n,m)$. Consider equation (\ref{sys}) where $\hat\lambda=(\lambda_1, \dots, \lambda_{n-1})$ and     $\hat\mu=(\mu_1, \dots, \mu_{s-\lambda_{n}-1},\mu_{s-\lambda_{n}+1},\dots,\mu_{s})$ and $p=\lambda_{n}+\mu_{s-\lambda_{n}}$. It is easy to see that the equation contains $c(\lambda,\mu)$ with non-zero coefficient and $(\lambda,\mu)$ is maximal among all irregular pairs in this equation.
\end{proof}

Now let us prove the Theorem. To show that ${\mathcal
N}_{n,m,q,t}= \Lambda_{n,m,q,t}$ it is enough to prove that the
dimension of the homogeneous component of degree $N$ of ${\mathcal
N}_{n,m,q,t}$ is not less than $D_{N}(n,m)$. 
To produce enough
independent polynomials consider the super Macdonald polynomials (\ref{superJack})
$$SP_{\lambda}(x,y,q,t) = \varphi (P_{\lambda}(z,q,t)),  \quad \lambda
\in H_{n,m}.$$
From the formula (\ref{Jack}) below it follows that the leading term of $SP_{\lambda}(x,y,q,t))$ in
lexicographic order has a form
 $$
  x_{1}^{\lambda_{1}}\dots
x_{n}^{\lambda_{n}}y_{1}^{<{\lambda}^{\prime}_{1}-n>}\dots
  y_{m}^{<{\lambda}^{\prime}_{m}-n>},
 $$
where $\lambda^{\prime}$ is the partition conjugate to $\lambda$
and $<x>=\frac{x+|x|}{2} = max(0,x)$. From the definition $\varphi
(P_{\lambda}(z,q,t))\in {\mathcal N}_{n,m,q,t}$. It is clear that
all these polynomials corresponding to the diagrams contained in
the fat hook are linearly independent. This completes the proof of
Theorem \ref{defN}.

\section{Shifted super Macdonald polynomials and Harish-Chandra homomorphism}

Let again $P_{n,m}={\mathbb
C}[x_{1},\dots,x_{n},y_{1},\dots,y_{m}]$ be polynomial algebra in
$n+m$ independent variables. The following algebra $\Lambda_{n,m,q,t}^{\natural}$ can be considered as a shifted version of the algebra $\Lambda_{n,m,q,t}.$ It consists of the polynomials $p(x_1,\dots,x_n, y_1, \dots, y_m),$ which are symmetric in
$x_{1},x_{2}t,\dots,x_{n}t^{n-1}$ and \\$y_{1},y_{2}q\dots,y_{m}q^{m-1}$ separately
and satisfy the conditions
\begin{equation}
\label{condshift} T_{q,x_{i}}(f)=T_{t,y_{j}}(f)
\end{equation}
 on each
hyperplane  $x_{i}t^{i-1}- y_{j}q^{j-1}=0$ for $i=1,\dots,n$ and $j=1,\dots,m.$

Now we are going to define the homomorphism $\varphi^{\natural}$
which is a shifted version of the homomorphism $\varphi$ from the
previous section.

Recall that $H_{n,m}$ denote the set of partitions $\lambda$ whose
diagrams are contained in the fat $(n,m)$-hook. Consider the
following  $ F : H_{n,m} \longrightarrow \mathbb{C}^{n+m}$ : $F
(\lambda)=(p_{1},\dots,p_{n},q_{1},\dots,q_{m}),$ where
$$p_{i}=
q^{\lambda_{i}},\quad q_{j}=t^{\mu_{j}^{'}}t^{n},
$$
and $\mu=(\lambda_{n+1},\lambda_{n+2},\dots).$ The image
$F(H_{n,m})$ is dense in $\mathbb{C}^{n+m}$ with respect to
Zariski topology. The homomorphism
$$ \varphi^{\natural}:
\Lambda_{t}\longrightarrow {\bf
C}[x_{1},\dots,x_{n},y_{1},\dots,y_{m}] $$ is defined by the
relation $$ \varphi^{\natural}(f)(p,q)=f((F^{-1}(p,q)),$$ where
$(p,q)\in F(H_{n,m})$. In other words we consider the shifted
symmetric function $f$ as a function on the partitions from the
fat hook and re-write it in the new coordinates. The fact that as
a result we will have a polynomial is not obvious.

\begin{lemma}\label{4}
 The image $\varphi^{\natural}(f)$ of a shifted
symmetric function $f \in \Lambda_{t}$ is a polynomial. For the
shifted power sums  $p^{*}_{k}(z,t)$ it can be given by the
following explicit formula:
\begin{eqnarray}
\label{frob} \varphi^{\natural}\left(p_{k}^{*}(z,t)\right)=
\sum_{i=1}^n(x_{i}^r-1)t^{r(i-1)}+\frac{1-q^r}{1-t^r}\sum_{j=1}^m
(y_{j}^r-t^{rn})q^{r(j-1)}.
\end{eqnarray}
\end{lemma}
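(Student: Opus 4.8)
The plan is to use that $\varphi^{\natural}$ is an algebra homomorphism together with the fact that $\Lambda_{t}$ is generated by the shifted power sums, so that everything reduces to the explicit evaluation on the generators $p^{*}_{r}(z,t)$. First I would observe that the prescription $f\mapsto f\circ F^{-1}$ defines an algebra homomorphism from $\Lambda_{t}$ into the algebra of $\mathbb{C}$-valued functions on $F(H_{n,m})$. Indeed, since $q,t$ are not roots of unity the map $F$ is injective (the numbers $p_{i}=q^{\lambda_{i}}$ recover the first $n$ rows of $\lambda$, and $q_{j}=t^{\mu'_{j}+n}$ recovers the columns of the tail $\mu=(\lambda_{n+1},\lambda_{n+2},\dots)$), so $F^{-1}$ is well defined on $F(H_{n,m})$ and the assignment respects pointwise sums and products. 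As the products $p^{*}_{\nu}$ form a basis of $\Lambda_{t}$, the shifted power sums $p^{*}_{r}$ generate it as an algebra (this was already used in deriving (\ref{dual})); hence once I show that each $\varphi^{\natural}(p^{*}_{r})$ is the restriction to $F(H_{n,m})$ of a polynomial, it follows that $\varphi^{\natural}(f)$ agrees on $F(H_{n,m})$ with a polynomial for every $f$. The stated Zariski density of $F(H_{n,m})$ in $\mathbb{C}^{n+m}$ then guarantees that this polynomial is unique, which is exactly the first assertion of the Lemma.

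The heart of the proof is the computation of $\varphi^{\natural}(p^{*}_{r})$. Fix $\lambda\in H_{n,m}$ and set $\mu=(\lambda_{n+1},\lambda_{n+2},\dots)$; because $\lambda_{n+1}\le m$ every part of $\mu$ is at most $m$, so $\mu'_{m+1}=0$ and $\mu'$ has at most $m$ parts. Evaluating the shifted power sum at $z=q^{\lambda}$ and separating the first $n$ rows gives
$$
p^{*}_{r}(q^{\lambda},t)=\sum_{i=1}^{n}\left(q^{r\lambda_{i}}-1\right)t^{r(i-1)}
+t^{rn}\sum_{k\ge 1}\left(q^{r\mu_{k}}-1\right)t^{r(k-1)}.
$$
The first sum is $\sum_{i=1}^{n}(x_{i}^{r}-1)t^{r(i-1)}$ with $x_{i}=q^{\lambda_{i}}$. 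For the tail the key step is to pass to the conjugate partition $\mu'$ using the transposition identity (\ref{conjugate}) applied with $(q,t)$ replaced by $(t^{r},q^{r})$ and $\lambda$ replaced by $\mu$, which gives
$$
\sum_{k\ge 1}\left(q^{r\mu_{k}}-1\right)t^{r(k-1)}
=\frac{1-q^{r}}{1-t^{r}}\sum_{j=1}^{m}\left(t^{r\mu'_{j}}-1\right)q^{r(j-1)},
$$
the right-hand sum being finite since $\mu'$ has at most $m$ parts. Substituting $y_{j}=t^{\mu'_{j}+n}$, so that $y_{j}^{r}=t^{r(\mu'_{j}+n)}$, the prefactor $t^{rn}$ converts $t^{r\mu'_{j}}-1$ into $y_{j}^{r}-t^{rn}$, and adding the two contributions yields exactly formula (\ref{frob}) (with $k=r$). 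In particular $\varphi^{\natural}(p^{*}_{r})$ is a polynomial in $x_{1},\dots,x_{n},y_{1},\dots,y_{m}$.

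The main obstacle is the bookkeeping in this second step. One must correctly decompose $\lambda$ into its first $n$ rows, which produce the $x$-variables, and its tail $\mu$, whose conjugate columns produce the $y$-variables, and then convert a power sum taken over the rows of $\mu$ into one over the columns of $\mu$. Here it is essential to apply (\ref{conjugate}) with the roles of $q$ and $t$ interchanged (and both raised to the $r$-th power): the tail involves $q^{r\mu_{k}}$ weighted by $t^{r(k-1)}$, whereas the target involves $t^{r\mu'_{j}}$ weighted by $q^{r(j-1)}$, and getting this direction right is precisely what produces the factor $\frac{1-q^{r}}{1-t^{r}}$ of the deformed Newton sums. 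The hypothesis $\lambda\in H_{n,m}$ enters twice and must be used carefully: it ensures both that $\mu$ has parts $\le m$, so the conjugate sum runs only up to $j=m$, and that the substitution $y_{j}=t^{\mu'_{j}+n}$ captures exactly the columns of the tail. Once the generators are handled, the reduction described in the first paragraph completes the proof.
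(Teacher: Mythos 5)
Your proposal is correct and follows essentially the same route as the paper: evaluate $p^{*}_{r}$ at $q^{\lambda}$ for $\lambda\in H_{n,m}$, split off the first $n$ rows, apply the conjugation identity (\ref{conjugate}) with $q,t$ replaced by suitable powers to convert the tail into a sum over the columns of $\mu'$, and then invoke the fact that the shifted power sums generate $\Lambda_{t}$. The extra care you take with the injectivity of $F$, the homomorphism property, and Zariski density only makes explicit what the paper leaves implicit.
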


\begin{proof} Assume that $z_{i}=\lambda_{i}$, where $\lambda\in
H_{n,m}$. Then we have
$$
\varphi^{\natural}(p_{k}^{*}(z,t))=
\sum_{i\ge
1}(q^{r\lambda_{i}}-1)t^{i-1}=
\sum_{i=1}^n(q^{r\lambda_{i}}-1)t^{r(i-1)}+t^{rn}\sum_{i\ge
1}(q^{r\lambda_{n+i}}-1)t^{r(i-1)}.
$$
Now using (\ref{conjugate})  we have
$$
\sum_{i\ge
1}(q^{r\lambda_{n+i}}-1)t^{r(i-1)}=\frac{1-q^r}{1-t^r}\sum_{j=1}^m
(t^{r\mu_{j}^{\prime}}-1)q^{r(j-1)},
$$
which proves the formula (\ref{frob}). Since the shifted sums
generate $\Lambda_{t}$ this implies the first part of Lemma as
well.
\end{proof}

\begin{thm}\label{hook}  If the parameters $q,t$ are generic then the
image of the homomorphism $\varphi^{\natural}$ coincides with the
algebra ${\Lambda}_{n,m,q,t}^{\natural}$ and the kernel of
$\varphi^{\natural}$ is spanned by the shifted Macdonald
polynomials $ P_{\lambda}^{*}(z,q,t)$ corresponding to the Young
diagrams which are not contained in the fat $(n,m)$-hook.
\end{thm}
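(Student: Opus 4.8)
The plan is to follow the same three-step pattern as in the proof of Theorem~\ref{ker}: first show that the image of $\varphi^{\natural}$ is contained in $\Lambda_{n,m,q,t}^{\natural}$, then use the Extra Vanishing Condition to locate the kernel, and finally match dimensions to conclude that the image is \emph{all} of $\Lambda_{n,m,q,t}^{\natural}$ and that the kernel is exactly the span of the $P_{\lambda}^{*}$ with $\lambda\notin H_{n,m}$. The key observation that makes the last step clean is that under the diagonal rescaling $u_{i}=x_{i}t^{i-1}$, $v_{j}=y_{j}q^{j-1}$ the defining conditions of $\Lambda_{n,m,q,t}^{\natural}$ become \emph{literally} those of $\Lambda_{n,m,q,t}$ (symmetry in $u_{i}$ and in $v_{j}$, together with $T_{q,x_{i}}=T_{q,u_{i}}$ and $T_{t,y_{j}}=T_{t,v_{j}}$ agreeing on the hyperplane $u_{i}=v_{j}$). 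Since this rescaling is an invertible degree-preserving substitution, it gives a graded algebra isomorphism $\Lambda_{n,m,q,t}^{\natural}\cong\Lambda_{n,m,q,t}$; in particular $\Lambda_{n,m,q,t}^{\natural}$ is graded and has the same Hilbert series.

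First I would check $\varphi^{\natural}(f)\in\Lambda_{n,m,q,t}^{\natural}$ for every $f\in\Lambda_{t}$. As $\varphi^{\natural}$ is an algebra homomorphism and $\Lambda_{t}$ is generated by the shifted power sums, it suffices to verify this for $g=\varphi^{\natural}(p_{r}^{*}(z,t))$, and the explicit formula (\ref{frob}) does everything: the two sums in (\ref{frob}) are power sums in the shifted variables $x_{i}t^{i-1}$ and $y_{j}q^{j-1}$, so $g$ has the required separate symmetry, while a one-line computation gives $T_{q,x_{i}}g-g=(q^{r}-1)(x_{i}t^{i-1})^{r}$ and $T_{t,y_{j}}g-g=(q^{r}-1)(y_{j}q^{j-1})^{r}$, which coincide on the hyperplane $x_{i}t^{i-1}=y_{j}q^{j-1}$. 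Because the shift operators are substitution homomorphisms, condition (\ref{condshift}) is preserved under products and sums, so checking it on generators is enough.

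Next I would identify the kernel. By the very definition of $\varphi^{\natural}$, evaluating $\varphi^{\natural}(f)$ at the point $F(\mu)$ returns $f(q^{\mu})$. Hence for $\nu\notin H_{n,m}$ the Extra Vanishing Condition gives $\varphi^{\natural}(P_{\nu}^{*})(F(\mu))=P_{\nu}^{*}(q^{\mu})=0$ for all $\mu\in H_{n,m}$: indeed $P_{\nu}^{*}(q^{\mu})=0$ unless $\nu\subseteq\mu$, and $\nu\notin H_{n,m}$ means $\nu_{n+1}>m$, which forces $\mu_{n+1}>m$ for any $\mu\supseteq\nu$, so no $\mu\in H_{n,m}$ contains $\nu$. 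Since $F(H_{n,m})$ is Zariski dense, $\varphi^{\natural}(P_{\nu}^{*})\equiv0$. For $\lambda,\mu\in H_{n,m}$ the evaluation matrix $V_{\lambda\mu}=P_{\lambda}^{*}(q^{\mu})$ is triangular with respect to any order refining $\subseteq$, with diagonal entries $H(\lambda,q,t)\neq0$ for generic $q,t$, hence invertible; therefore the polynomials $\varphi^{\natural}(P_{\lambda}^{*})$, $\lambda\in H_{n,m}$, are linearly independent (a vanishing combination, evaluated at the $F(\mu)$, forces all coefficients to vanish). As the $P_{\lambda}^{*}$ form a basis of $\Lambda_{t}$, this already shows that $\mathrm{Ker}\,\varphi^{\natural}$ is precisely the span of $\{P_{\nu}^{*}:\nu\notin H_{n,m}\}$ and that $\mathrm{Im}\,\varphi^{\natural}=\mathrm{span}\{\varphi^{\natural}(P_{\lambda}^{*}):\lambda\in H_{n,m}\}$.

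It remains to see that these images exhaust $\Lambda_{n,m,q,t}^{\natural}$, which I regard as the main obstacle, since everything above is either a direct consequence of the Extra Vanishing Condition or a computation with (\ref{frob}). Here I would invoke the rescaling isomorphism: it identifies $\Lambda_{n,m,q,t}^{\natural}$ with $\Lambda_{n,m,q,t}$ and is degree-preserving, so by Theorem~\ref{ker} and the remark following it (the super Macdonald polynomials form a graded basis of $\Lambda_{n,m,q,t}$) the dimension of the filtration level of degree at most $N$ of $\Lambda_{n,m,q,t}^{\natural}$ equals the number of partitions in the fat $(n,m)$-hook of weight at most $N$. On the other hand, since $\varphi^{\natural}$ does not raise degree, each $\varphi^{\natural}(P_{\lambda}^{*})$ with $|\lambda|\le N$ has degree at most $N$, there are exactly that many of them, and they are linearly independent; being the right number of linearly independent elements of $\Lambda_{n,m,q,t}^{\natural}$, they must span its filtration level of degree at most $N$. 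Letting $N\to\infty$ gives $\mathrm{Im}\,\varphi^{\natural}=\Lambda_{n,m,q,t}^{\natural}$ and completes the proof. (Should one wish to avoid the isomorphism, the same dimension bound can be obtained by repeating the linear-system argument of the proof of Theorem~\ref{defN} in the shifted variables $x_{i}t^{i-1}$, $y_{j}q^{j-1}$.)
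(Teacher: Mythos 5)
Your proof is correct, and while it follows the same three-step skeleton as the paper (land in $\Lambda_{n,m,q,t}^{\natural}$ via (\ref{frob}); Extra Vanishing for one inclusion of the kernel; linear independence of the images of the hook polynomials plus a size count for the rest), the two technical workhorses you use are genuinely different from the paper's. For linear independence of $\{\varphi^{\natural}(P_{\lambda}^{*})\}_{\lambda\in H_{n,m}}$ the paper expands $\varphi^{\natural}(P_{\lambda}^{*})$ as the super Macdonald polynomial in the rescaled variables plus lower-order terms and invokes Theorem \ref{ker}, whereas you argue by interpolation: the evaluation matrix $P_{\lambda}^{*}(q^{\mu})$ is triangular for $\subseteq$ with nonzero diagonal $H(\lambda,q,t)$, so a vanishing combination is killed by evaluating at a minimal element of its support. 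This uses only the defining vanishing properties and Zariski density of $F(H_{n,m})$, and is more self-contained. For surjectivity the paper's terse ``follows from Lemma \ref{4} and Theorem \ref{defN}'' is an associated-graded argument (the top-degree parts of $\varphi^{\natural}(p_{r}^{*})$ are the deformed Newton sums in the variables $x_{i}t^{i-1}$, $y_{j}q^{j-1}$, which generate), while you make the underlying rescaling isomorphism $\Lambda_{n,m,q,t}^{\natural}\cong\Lambda_{n,m,q,t}$ explicit and finish by counting dimensions of filtration levels against the number of hook partitions of weight at most $N$; both routes ultimately rest on Theorem \ref{defN}/Theorem \ref{ker} for the size of $\Lambda_{n,m,q,t}$. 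A side benefit of your write-up is that you actually verify that $\varphi^{\natural}(\Lambda_{t})\subseteq\Lambda_{n,m,q,t}^{\natural}$ on the generators (the computation $T_{q,x_{i}}g-g=(q^{r}-1)(x_{i}t^{i-1})^{r}=T_{t,y_{j}}g-g$ on the hyperplane), a point the paper leaves implicit.
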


\begin{proof}
The first claim follows from Lemma \ref{4} and Theorem \ref{defN}.
To prove the statement about the kernel consider a
shifted Macdonald polynomial $P^*_{\lambda}(z,q,t)$ with $\lambda
\in \bar H_{n,m}.$ Let $\mu$ be a partition whose diagram is
contained in the fat $(n,m)$-hook. Since this implies that the
diagram of $\lambda$ is not a subset of the one of $\mu$ according
to the Extra Vanishing Property of shifted Macdonald polynomials
(see Section 3) we have $P^*_{\lambda}(q^{\mu},q,t)=0.$ Thus we have
shown that $P^*_{\lambda}(z,q,t)$ with $\lambda \in \bar H_{n,m}$
belong to the kernel of $\varphi.$ To show that they generate the
kernel one should note that $$
 \varphi^{\natural}(P^*_{\lambda}(z,q,t))=\varphi (P_{\lambda}(z,q,t))
 (x_{1},x_{2}t,\dots,x_{n}t^{n-1}, y_{1},y_{2}q\dots,
 y_{m}q^{m-1})+\dots,
$$
where dots mean the terms of degree less then $|\lambda|$. From
theorem \ref{ker} it follows that
$\varphi^{\natural}(P^*_{\lambda}(z,q,t))$ with $\lambda\in
H_{n,m}$ are linearly independent. Theorem is proved.
\end{proof}

\begin{corollary} 
 For generic $q,t$ the functions
$$SP^*_{\lambda}(x,y,q,t)=\varphi^{\natural}(P^*_{\lambda}(z,q,t))$$
with $\lambda\in H_{n,m}$ form a basis in ${\Lambda}_{n,m,q,t}^{\natural}$.
\end{corollary}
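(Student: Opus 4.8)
The plan is to deduce this almost immediately from Theorem~\ref{hook} by a short linear-algebra argument, since that theorem already isolates both the image and the kernel of $\varphi^{\natural}$.

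First I would recall the fundamental fact from the Knop--Sahi--Okounkov theory that the shifted Macdonald polynomials $\{P^{*}_{\lambda}(z,q,t)\}$, indexed by all partitions $\lambda$, form a linear basis of the algebra $\Lambda_{t}$ of shifted symmetric functions: each $P^{*}_{\lambda}$ has degree $|\lambda|$ and they are triangular with respect to the filtration of $\Lambda_{t}$, so they are a basis rather than merely a spanning or independent family. I then split the index set of all partitions into the fat-hook partitions $H_{n,m}$ and their complement $\bar H_{n,m}$.

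Next I would invoke Theorem~\ref{hook}, which supplies the two inputs needed in their sharp form: the image $\varphi^{\natural}(\Lambda_{t})$ is exactly the algebra $\Lambda_{n,m,q,t}^{\natural}$, and the kernel of $\varphi^{\natural}$ is spanned (not merely contained) by the subfamily $\{P^{*}_{\lambda}(z,q,t) : \lambda \in \bar H_{n,m}\}$. Then the conclusion is pure linear algebra: when the kernel of a surjection is the span of a subfamily of a basis, the images of the complementary subfamily form a basis of the image. Concretely, the functions $SP^{*}_{\lambda}=\varphi^{\natural}(P^{*}_{\lambda})$ with $\lambda\in H_{n,m}$ span $\Lambda_{n,m,q,t}^{\natural}$, because the $P^{*}_{\lambda}$ with $\lambda\in H_{n,m}$ together with the kernel span all of $\Lambda_{t}$ and $\varphi^{\natural}$ is onto; and they are linearly independent, because any linear relation among them would exhibit a nonzero element of $\ker\varphi^{\natural}$ lying in the span of the $H_{n,m}$-indexed basis vectors, which is impossible since that span meets the kernel only in $0$.

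As an alternative route to independence I would point out that the required estimate has in effect already been done inside the proof of Theorem~\ref{hook}: the expansion $\varphi^{\natural}(P^{*}_{\lambda}(z,q,t))=\varphi(P_{\lambda}(z,q,t))(x_{1},x_{2}t,\dots,y_{m}q^{m-1})+\cdots$ shows that the top-degree part of $SP^{*}_{\lambda}$ is the super Macdonald polynomial $\varphi(P_{\lambda}(z,q,t))$, and these have distinct leading monomials by Theorem~\ref{ker}, forcing independence. The main point to be careful about — and the only real obstacle — is to use both ingredients at full strength: that $\{P^{*}_{\lambda}\}$ is a genuine basis of $\Lambda_{t}$, and that Theorem~\ref{hook} gives the kernel as the exact span of the $\bar H_{n,m}$ subfamily. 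Once both are in hand, the splitting of a basis along a kernel complement yields the statement with no further computation.
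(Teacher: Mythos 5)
Your argument is correct and is essentially the deduction the paper intends: the corollary follows from Theorem~\ref{hook} exactly by the basis-splitting argument you give, and your alternative independence check via leading terms is precisely the step already carried out inside the paper's proof of Theorem~\ref{hook} (where $\varphi^{\natural}(P^*_{\lambda})$ is shown to have top-degree part $\varphi(P_{\lambda})$ evaluated at the shifted variables, with independence coming from Theorem~\ref{ker}). No gap.
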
 

We will call the polynomials $SP^*_{\lambda}(x,y,q,t)$ the {\it
shifted super Macdonald polynomials.} We are going to show that to any such polynomial
corresponds a quantum integral of the deformed MR system.

Let us consider the algebra of difference operators in $n+m$
variables with rational coefficients belonging to ${\mathbb
C}[x_{1},\dots,x_{n},y_{1},\dots,y_{m}, (x_{i}-x_{j})^{-1},
(x_{i}-y_{l})^{-1},  (y_{k}-x_{l})^{-1})], \quad 1\le i<j\le n,
1\le l < k \le m.$ We denote it as $\Delta(n,m).$

\begin{thm}  For generic values of $q,t$ there exists a
unique monomorphism $\psi: \Lambda_{n,m,q,t}^{\natural} \rightarrow
\Delta(n,m)$ such that the following diagram is commutative
$$
\begin{array}{ccc}
\Lambda_{t}&\stackrel{\chi}{\longrightarrow}&\Delta(q,t)
\\ \downarrow \lefteqn{\varphi^{\natural}}& &\downarrow \lefteqn{res}\\
\Lambda^{\natural}_{n,m,q,t}&\stackrel{\psi}{\longrightarrow}&\Delta(n,m),
\\
\end{array}
$$
where $\chi$ is the inverse Harish-Chandra homomorphism 
and $res$ is the operation of restriction onto $\Delta(n,m,q,t)$ described by Corollary \ref{RES}.
\end{thm}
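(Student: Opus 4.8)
The plan is to construct the monomorphism $\psi$ directly from the structure already established and then verify commutativity and uniqueness. First I would recall that by Corollary \ref{RES}, for each shifted symmetric function $f \in \Lambda_t$ the operator ${\mathcal M}^f_{q,t} = \chi(f)$ descends through the restriction $res$ to a difference operator $M^f_{n,m,q,t}$ acting on $\Lambda_{n,m,q,t}$, and this operator lies in $\Delta(n,m)$ since its coefficients are rational with the prescribed denominators. The key point is that, by Corollary \ref{RES}, the super Macdonald polynomials $SP_\lambda(x,y,q,t)$ with $\lambda \in H_{n,m}$ are joint eigenfunctions of all the $M^f_{n,m,q,t}$. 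The plan is therefore to \emph{define} $\psi$ on the basis $\{SP^*_\lambda(x,y,q,t) : \lambda \in H_{n,m}\}$ of $\Lambda^{\natural}_{n,m,q,t}$ (which is a basis by the preceding Corollary) by setting $\psi(SP^*_\lambda) := M^{P^*_\lambda}_{n,m,q,t}$, and then to extend by linearity; equivalently, $\psi$ is specified by the requirement that $\psi(g)$ act on each $SP_\mu$ as multiplication by the eigenvalue $g$ evaluated at the appropriate spectral point.

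Next I would establish commutativity of the diagram. Tracing an element $f \in \Lambda_t$ around the square, going right-then-down gives $res(\chi(f)) = res({\mathcal M}^f_{q,t}) = M^f_{n,m,q,t}$, while going down-then-right gives $\psi(\varphi^{\natural}(f))$. So commutativity amounts to the identity $\psi(\varphi^{\natural}(f)) = M^f_{n,m,q,t}$ for all $f \in \Lambda_t$. To verify this it suffices to check it on the spectral level: both operators are diagonal in the super Macdonald basis, so I would compare eigenvalues. The operator $M^f_{n,m,q,t}$ acts on $SP_\lambda = \varphi(P_\lambda)$ with eigenvalue $f(q^\lambda)$, inherited from the eigenvalue of ${\mathcal M}^f_{q,t}$ on $P_\lambda$ via the commutative diagram of Corollary \ref{RES}. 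On the other hand, the function $\varphi^{\natural}(f)$ is by construction $f$ re-expressed in the coordinates $(p,q) = F(\lambda)$, so evaluating the shifted super Macdonald eigenvalue at the spectral point corresponding to $\lambda$ reproduces exactly $f(q^\lambda)$. The matching of these two spectra is the crux, and it reduces to the compatibility between the definition of $\varphi^{\natural}$ through the map $F$ and the eigenvalue formula (\ref{LP}) for the Harish-Chandra homomorphism.

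For uniqueness and injectivity I would argue as follows. Since the $SP^*_\lambda$ with $\lambda \in H_{n,m}$ form a basis of $\Lambda^{\natural}_{n,m,q,t}$ and the corresponding operators $M^{P^*_\lambda}_{n,m,q,t}$ are simultaneously diagonalised with pairwise distinct eigenvalue systems on the super Macdonald polynomials (for generic $q,t$, by the genericity of the separating eigenvalues $c_{\lambda,\lambda}$ noted in Section 2), the map $\psi$ is forced on a basis and hence unique; moreover linear independence of these diagonal operators gives injectivity, so $\psi$ is a monomorphism. Uniqueness also follows abstractly: the density of $F(H_{n,m})$ in $\mathbb{C}^{n+m}$ together with surjectivity of $\varphi^{\natural}$ onto $\Lambda^{\natural}_{n,m,q,t}$ (Theorem \ref{hook}) means that any $\psi$ making the diagram commute is determined on the image of $\varphi^{\natural}$, which is everything.

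I expect the main obstacle to be the precise matching of spectral data in the second step, namely confirming that $\varphi^{\natural}(f)$ evaluated at $F(\lambda)$ agrees with the eigenvalue $f(q^\lambda)$ of $M^f_{n,m,q,t}$ on $SP_\lambda$. This requires carefully reconciling the definition of $\varphi^{\natural}$ via $F(\lambda) = (q^{\lambda_i}, t^{\mu'_j} t^n)$ with the action recorded in (\ref{LP}) and (\ref{frob}); in particular the appearance of the factor $t^{rn}$ and the conjugation in (\ref{conjugate}) must be shown to be exactly what is needed for the eigenvalues on the $x$- and $y$-variables to fit together consistently. Once this spectral identity is in hand, the well-definedness of $\psi$ as an algebra homomorphism (rather than merely a linear map) follows because $f \mapsto M^f_{n,m,q,t}$ is multiplicative, being the restriction of the multiplicative Harish-Chandra map $\chi$.
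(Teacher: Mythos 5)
Your proposal is correct and follows essentially the same route as the paper: both arguments rest on the eigenvalue identity $M^f_{n,m,q,t}\,SP_\mu = f(q^\mu)\,SP_\mu$ from Corollary \ref{RES}, the fact that the $SP_\mu$, $\mu\in H_{n,m}$, span $\Lambda_{n,m,q,t}$, and the surjectivity and kernel description of $\varphi^{\natural}$ from Theorem \ref{hook}. The paper merely packages this as the single equality $\mathrm{Ker}(res\circ\chi)=\mathrm{Ker}\,\varphi^{\natural}$ and invokes the induced-map argument, whereas you define $\psi$ on the basis $SP^*_\lambda$ and check the same spectral matching directly; the "crux" you flag is in fact immediate from the definition of $\varphi^{\natural}$ as evaluation at the points $F(\lambda)$.
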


Indeed let $f$ be a shifted symmetric function from $\Lambda_{t},$
${\mathcal M}^{f}_{q,t}$ and ${\mathcal M}^{f}_{n,m,q,t} = res
({\mathcal M}^{f}_{q,t})$ be the same as in Corollary \ref{RES}. We know that if
$P_{\lambda}(z, q,t)$ is a Macdonald symmetric function then
$${\mathcal M}^{f}_{n,m,q,t} \varphi(P_{\lambda}(z,
q,t))=f(q^{\lambda})\varphi(P_{\lambda}(z, q,t)).$$ Therefore
according to Theorem \ref{ker}
${\mathcal M}^{f}_{n,m,q,t}\equiv 0$
if and only if $f(q^\lambda)=0$ for any $\lambda$ with the diagram
contained in the fat $(n,m)$-hook. Now from Theorem \ref{hook} it follows
that $Ker(res \circ \chi) = Ker \varphi^{\natural}$.

\section{Combinatorial formulas}

In this section we give some combinatorial formulas for the
super Macdonald polynomials and shifted super Macdonald
polynomials generalising the results by  Okounkov \cite{Ok}. 
Let us recall his results.

A tableau $T$ on $\lambda$ is called a {\it reverse tableau} if
its entries
 decrease strictly downwards in each column and weakly rightwards in each row. 
By $T(s)$ we denote the entry in the box
$s\in\lambda$. The following combinatorial formula for the shifted
Macdonald polynomial was proven by Okounkov in \cite {Ok}:
\begin{equation}
\label{shiftedjack} P_{\lambda}^{*}(x_1,\dots, x_N,q,t)=\sum_{T}
\psi_{T}(q,t)\prod_{s\in\lambda}
\left(x_{T(s)}-q^{a^{\prime}(s)}t^{l^{\prime}(s)}\right)t^{T(s)-1}
\end{equation}
where $a^{\prime}(s)$ and $l^{\prime}(s)$ are defined for a box $s = (i,j)$ as
$$a^{\prime}(s) = j-1,\quad l^{\prime}(s)= i-1.$$ 
Here the sum is taken over all reverse tableaux on $\lambda$
with entries  in $\{1,\dots,N\}$ and $\psi_{T}(q,t)$ is the same
weight as in the combinatorial formulas for the ordinary
Macdonald polynomials (see  \cite{Ma}, VI, (7.13')) interpreted in terms of reverse tableau:
\begin{equation}
\label{jack2} P_{\lambda}(x_1,\dots, x_N,q,t)=\sum_{T}
\psi_{T}(q,t)\prod_{s\in\lambda} x_{T(s)}.
\end{equation}
\begin{equation}
\label{jack3} P_{\lambda/\mu}(x_1, \dots, x_N,q,t)=\sum_{T}
\psi_{T}(q,t)\prod_{s\in\lambda/\mu} x_{T(s)}.
\end{equation}
In the last formula the sum is taken over all reverse tableaux of shape
$\lambda/\mu$ with entries in $\{1,\dots,N \}.$

Let us consider now a reverse {\it bitableau} $T$   of type
$(n,m)$ and shape $\lambda$. We  can view $T$ as a filling of a
Young diagram  $\lambda$ by symbols $1<2<\dots
<n<1^{\prime}<2^{\prime}<\dots< m^{\prime}$ with entries
 decreasing weakly downwards in each column and rightwards in each row; additionally 
entries $1,2\dots ,n$ decrease strictly  downwards in each column and
entries $1^{\prime},2^{\prime}\dots, m^{\prime}$ decrease strictly
rightwards in each row. Let $T_{1}$ be a subtableau  in $T$ containing
all symbols $1^{\prime},2^{\prime}\dots, m^{\prime}$ and
$T_{0}=T-T_{1}$ and $\mu$ is the shape of $T_{1}$.

\begin{thm} 
 For generic values of the parameters $q,t$
the super Macdonald  polynomials can be written as
\begin{equation}
\label{deformedjack}
SP_{\lambda}(x_{1},x_{2},\dots,x_{n},y_{1},y_{2},\dots,y_{m},q,t)=\sum_{T}
\psi_{T}(q,t)\prod_{s\in\lambda}x_{T(s)}
\end{equation}
where $x_{j^{\prime}}$ is denoted as $y_{j}$ and
 $$
\psi_{T}(q,t)=(-1)^{|\mu|}
\psi_{T_{1}^{\prime}}(t,q)\psi_{T_{0}}(q,t)\frac{H(\mu,q,t)}
{H(\mu^{\prime},t,q)} $$
\end{thm}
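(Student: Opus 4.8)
The plan is to start from the restriction formula (\ref{Jack}),
$$\varphi(P_\lambda(z,q,t)) = \sum_{\mu\subset\lambda} (-1)^{|\mu|} P_{\lambda/\mu}(x,q,t)\frac{H(\mu,q,t)}{H(\mu',t,q)} P_{\mu'}(y,t,q),$$
which already exhibits the scalar factor $(-1)^{|\mu|}H(\mu,q,t)/H(\mu',t,q)$ occurring in the claimed weight $\psi_T(q,t)$. It then suffices to expand the two polynomial factors $P_{\lambda/\mu}(x,q,t)$ and $P_{\mu'}(y,t,q)$ by the combinatorial formulas (\ref{jack3}) and (\ref{jack2}) and to reorganize the resulting double sum over pairs of tableaux into a single sum over reverse bitableaux.

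First I would apply (\ref{jack3}) to write
$$P_{\lambda/\mu}(x_1,\dots,x_n,q,t)=\sum_{T_0}\psi_{T_0}(q,t)\prod_{s\in\lambda/\mu}x_{T_0(s)},$$
summed over reverse tableaux $T_0$ of shape $\lambda/\mu$ with entries in $\{1,\dots,n\}$; these supply the unprimed part of the bitableau. For the second factor I would use (\ref{jack2}) with the parameters interchanged,
$$P_{\mu'}(y_1,\dots,y_m,t,q)=\sum_{U}\psi_{U}(t,q)\prod_{s\in\mu'}y_{U(s)},$$
summed over reverse tableaux $U$ of shape $\mu'$ with entries in $\{1,\dots,m\}$. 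The key observation is that transposition identifies such a $U$ with a filling $T_1=U'$ of the shape $\mu$ that is strictly decreasing rightward along rows and weakly decreasing downward along columns — precisely the rule imposed on the primed symbols $1'<\dots<m'$ in a reverse bitableau — and that under this identification $\psi_U(t,q)=\psi_{T_1'}(t,q)$ while $\prod_{s\in\mu'}y_{U(s)}=\prod_{s\in\mu}y_{T_1(s)}$.

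The heart of the argument is the bijection between triples $(\mu,T_0,T_1)$ — a subdiagram $\mu\subset\lambda$ together with the two tableaux above — and reverse bitableaux $T$ of shape $\lambda$ whose primed subtableau has shape $\mu$. Given $T$, I would let $T_1$ be its primed part and $T_0$ its unprimed part, and check that the cells carrying primed symbols form a genuine Young diagram $\mu\subset\lambda$: since the primed symbols exceed all unprimed ones while the entries of $T$ weakly decrease both rightward and downward, the primed cells form an initial segment of every row and of every column, and the column condition forces $\mu_1\ge\mu_2\ge\dots$, so $\mu$ is a partition. Conversely, gluing $T_1$ on $\mu$ and $T_0$ on $\lambda/\mu$ yields a filling of $\lambda$ whose monotonicity across the common boundary is automatic because every primed entry exceeds every unprimed one. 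Assembling monomials, $\prod_{s\in\lambda/\mu}x_{T_0(s)}\cdot\prod_{s\in\mu}y_{T_1(s)}=\prod_{s\in\lambda}x_{T(s)}$ under the convention $x_{j'}=y_j$, and multiplying in the scalar factors gives exactly the stated $\psi_T(q,t)$, so substituting the expansions into (\ref{Jack}) and collecting terms according to the bijection yields (\ref{deformedjack}).

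I expect the main obstacle to be the careful verification of this bijection, and in particular the compatibility of the strict/weak monotonicity conditions under transposition: one must match the ``strict rightward / weak downward'' rule for the primed symbols against the ``strict downward / weak rightward'' rule defining an ordinary reverse tableau of the conjugate shape $\mu'$, and confirm that no additional constraint is generated along the boundary separating $T_0$ from $T_1$. Once this combinatorial dictionary is established, the remaining manipulation is purely a bookkeeping of weights.
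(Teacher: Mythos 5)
Your proposal is correct and follows exactly the route the paper intends: the paper's entire proof is the single sentence that the theorem ``follows directly from the formulas (\ref{Jack}), (\ref{jack2}), (\ref{jack3})'', and your argument is precisely the detailed unpacking of that claim, including the transposition dictionary $U=T_1'$ and the bijection between pairs of tableaux on $\mu$ and $\lambda/\mu$ and reverse bitableaux on $\lambda$. You have in fact supplied more detail (the verification that the primed cells form a subdiagram $\mu\subset\lambda$ and that the weights match) than the published proof does.
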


Proof follows directly from the formulas
(\ref{Jack}),(\ref{jack2}),(\ref{jack3}).

We are going to present now a combinatorial formula for
the shifted super Macdonald polynomial.

\begin{thm}  The following formula is true:
 $$
SP^{*} _{\lambda} =\sum_{T} \psi_{T}(q,t)\prod_{s\in\lambda}
\left(x_{T(s)}-q^{a^{\prime}(s)}t^{l^{\prime}(s)}\right)(q,t;s)^{T(s)-1}
$$
where $(q,t;s)=q$ if $s\in T_{1}$ and $(q,t;s)=t$ if $s\in
T_{0}$.
\end{thm}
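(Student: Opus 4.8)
The plan is to mirror the proof of the combinatorial formula (\ref{deformedjack}) for the unshifted super Macdonald polynomials, replacing each ingredient by its shifted counterpart. Recall that (\ref{deformedjack}) was obtained by feeding the branching formula (\ref{Jack}) into the combinatorial formulas (\ref{jack2}) and (\ref{jack3}), and then reorganising the resulting double sum over a pair of tableaux into a single sum over reverse bitableaux. Here I would start from the definition $SP^*_\lambda=\varphi^\natural(P^*_\lambda(z,q,t))$ and use Okounkov's formula (\ref{shiftedjack}) in place of (\ref{jack2}).

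The first step is to establish a shifted analogue of the branching formula (\ref{Jack}): I would show that $\varphi^\natural(P^*_\lambda)$ decomposes, over $\mu\subset\lambda$, as a sum of products of a shifted skew factor in the variables $x_1,\dots,x_n$ and the image of the shifted Macdonald polynomial $P^*_{\mu'}$ in the variables $y_1,\dots,y_m$ with the parameters $q,t$ interchanged, carrying the coefficient $(-1)^{|\mu|}H(\mu,q,t)/H(\mu',t,q)$ exactly as in (\ref{Jack}). This is the place where the power factor $(q,t;s)^{T(s)-1}$ acquires its two values: on the subtableau $T_1$ of shape $\mu$ the parameter swap dictated by the duality (\ref{dual}) and the twist (\ref{auto}) turns Okounkov's $t^{T(s)-1}$ into $q^{T(s)-1}$, so that $(q,t;s)=q$, while on $T_0$ it remains $t^{T(s)-1}$, so that $(q,t;s)=t$. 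The shifted variables $x_it^{i-1}$ and $y_jq^{j-1}$ from the definition of $\Lambda^\natural_{n,m,q,t}$ are precisely what produce these powers, consistent with the leading-term computation already used in the proof of Theorem \ref{hook} and with the explicit formula (\ref{frob}) of Lemma \ref{4}.

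The second step is to apply Okounkov's formula (\ref{shiftedjack}) and its skew version to the two factors and to reassemble. For a box $s=(i,j)$ the binomial shift $q^{a'(s)}t^{l'(s)}=q^{j-1}t^{i-1}$ is computed in the absolute coordinates of $\lambda$, and one checks that it is the same whether $s$ lies in $\lambda/\mu$ (treated by the skew formula with parameters $q,t$) or in $\mu$ (treated by $P^*_{\mu'}$ with parameters $t,q$, where conjugation swaps $a'\leftrightarrow l'$ and $q\leftrightarrow t$, so that $t^{i-1}q^{j-1}=q^{j-1}t^{i-1}$). The double sum over the reverse tableau $T_0$ of shape $\lambda/\mu$ and the reverse tableau $T_1'$ of shape $\mu'$ then collapses into a single sum over reverse bitableaux $T$ of shape $\lambda$, and the weight that emerges is the very same $\psi_T(q,t)=(-1)^{|\mu|}\psi_{T_1'}(t,q)\psi_{T_0}(q,t)H(\mu,q,t)/H(\mu',t,q)$ as in the preceding theorem, because the $\psi$-weights in Okounkov's shifted formula coincide with those of the ordinary Macdonald polynomials. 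Assembling the pieces yields the stated formula.

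The step I expect to be the main obstacle is the first one: justifying the shifted branching formula together with the skew version of Okounkov's formula, with the shift taken in the absolute coordinates of the ambient $\lambda$. Since the shifted polynomials live in a filtered rather than a graded algebra, one must show that the lower-order corrections (the dots in Theorem \ref{hook}) organise themselves precisely into the position-dependent binomials $x_{T(s)}-q^{a'(s)}t^{l'(s)}$ uniformly across the whole diagram, including across the boundary between $\lambda/\mu$ and $\mu$. Should the branching route prove delicate, a safer alternative is to characterise $SP^*_\lambda$ by the vanishing property it inherits from the Extra Vanishing Condition for $P^*_\lambda$ together with its top-degree term, which by Theorem \ref{hook} is given by (\ref{deformedjack}); one would then verify that the right-hand side has the same top-degree term and vanishes at the points $F(\mu)$ for the appropriate $\mu$, and invoke the Zariski density of $F(H_{n,m})$ and uniqueness to conclude.
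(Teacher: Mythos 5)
Your proposal follows essentially the same route as the paper: decompose $\varphi^\natural(P^*_\lambda)$ over $\mu\subset\lambda$ into a shifted skew factor in the $x$-variables times a dual/conjugated shifted polynomial $P^*_{\mu'}(y,t,q)$, then apply Okounkov's combinatorial formula (\ref{shiftedjack}) to both pieces and reassemble over bitableaux. The "main obstacle" you flag is exactly what the paper resolves by iterating Okounkov's known one-row branching rule (\ref{okun}) $n$ times and then invoking the duality (\ref{dual}), so no new lemma is actually needed there.
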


\begin{proof} Let us consider the skew diagram $\lambda/\mu$ in the
formula (\ref{jack2}) and define $$
P_{\lambda/\mu}^{*}(x,q,t)=\sum_{T}
\psi_{T}(q,t)\prod_{s\in\lambda/\mu}
\left(x_{T(s)}-q^{a^{\prime}(s)}t^{l^{\prime}(s)}\right)t^{T(s)-1}
.$$

Okounkov in \cite{Ok} proved the following formula
\begin{equation}
\label{okun}
P_{\lambda}^{*}(z_{1},z_{2}\dots,q,t)=\sum_{\mu\prec\lambda}\psi_{\lambda/\mu}
(q,t)\prod_{s\in\lambda/\mu}
\left(z_{1}-q^{a^{\prime}(s)}t^{l^{\prime}(s)}\right)t^{|\mu|}
P_{\mu}^{*}(z_{2},z_{3}\dots,q,t),
\end{equation}
 where $\mu\prec\lambda$ means
$\lambda_{i+1}\le\mu_{i}\le\lambda_{i}$ and
$\psi_{\lambda/\mu}(q,t)$ is the same coefficient  as for
the ordinary Macdonald polynomials \cite{Ma}
$$
P_{\lambda}(z_{1},z_{2}\dots,q,t)=\sum_{\mu\prec\lambda}
\psi_{\lambda/\mu}(q,t)
z_{1}^{|\lambda/\mu|}P_{\mu}(z_{2},z_{3}\dots,q,t).
$$
Strictly speaking Okounkov proved this for finitely many variables. To make sense of formula (\ref{okun})  in infinite dimension we embed the algebra $\Lambda_t$ to $\mathbb C[z_1] \otimes \Lambda_t$ by sending
$p_k^*$ to $z_1^{k}-1 + t^{k} p_k^*.$

Applying Okounkov's formula $n$ times we get $$
P_{\lambda}^{*}(z_{1},z_{2}\dots,q,t)=\sum_{\mu\subset\lambda}
P_{\lambda/\mu}^{*}(z_{1},z_{2},\dots,z_{n},q,t)t^{n|\mu|}
P_{\mu}^{*}(z_{n+1},z_{n+2}\dots,q,t),
$$ which implies $$
\varphi^{\natural}(P_{\lambda}^{*}(z_{1},z_{2}\dots,q,t))=
\sum_{\mu\subset\lambda}
P_{\lambda/\mu}^{*}(x_{1},x_{2},\dots,x_{n},q,t)t^{n|\mu|}\omega^*_{q,t}
(P_{\mu}^{*}(z_{n+1},z_{n+2}\dots,q,t)).
$$
Now using the duality (\ref{dual}) we have
$$
\varphi^{\natural}(P_{\lambda}^{*}(z_{1},z_{2}\dots,q,t))=
\sum_{\mu\subset\lambda}P_{\lambda/\mu}^{*}(x_{1},x_{2},\dots,x_{n},q,t)
\frac{H(\mu,q,t)}{H(\mu^{\prime},t,q)}
P_{\mu^{\prime}}^{*}(y_{1},y_{2}\dots,y_{m},t,q).
$$ But according to formula (\ref{shiftedjack})
$$
P_{\mu^{\prime}}^{*}(y_{1},y_{2}\dots,y_{m},t,q)=\sum_{T^{\prime}_{1}}
\psi_{T^{\prime}_{1}}(t,q)\prod_{s^{\prime}\in\mu^{\prime}}
\left(x_{T^{\prime}_{1}(s^{\prime})}-t^{a^{\prime}(s^{\prime})}q^{l^{\prime}
(s^{\prime})} \right)q^{T_{1}^{\prime}(s^{\prime})-1}
$$
$$ =\sum_{T_{1}} \psi_{T_{1}^{\prime}}(t,q)\prod_{s\in\mu}
\left(x_{T_{1}}(s)-q^{a^{\prime}(s)}t^{l^{\prime}(s)} \right
)q^{T_{1}(s)-1}.
$$ Therefore
$$ \varphi^{\natural}
(P_{\lambda}^{*}(z_{1},z_{2}\dots,q,t))=\sum_{T}t^{|\mu|}
\frac{H(\mu,q,t)}{H(\mu^{\prime},t,q)}
\psi_{T}(q,t)\prod_{s\in\lambda/\mu}\left(x_{T}(s)-
q^{a^{\prime}(s)}t^{l^{\prime}(s)}\right )t^{{T(s)}-1}
$$
$$
\psi_{T_{1}^{\prime}}(t,q)\prod_{s\in\mu} \left(x_{T_{1}}(s)-
q^{a^{\prime}(s)}t^{l^{\prime}(s)}\right )q^{{T_{1}(s)}-1}
 $$
and the Theorem is proved.
 \end{proof}

\section{Concluding remarks}

 Haglund, Haiman and Loehr \cite{HHL} recently proved a remarkable new combinatorial formula for Macdonald polynomials, previously proposed by Haglund. These formulas are much more effective than the original Macdonald's formulas (\ref{jack2}) and provide a new direct way to prove the main properties of Macdonald polynomials.
A natural problem is to find an analogue of Haglund's formula for the super Macdonald polynomials.

Another open problem is to find generalisations of our results for the deformed analogues of the Macdonald operators related to other Lie superalgebras (see \cite{SV}), in particular for the deformed Koornwinder operators. We hope to address some of these questions in the nearest future.

\section{Acknowledgments}

We are grateful to  A. Okounkov for useful and stimulating discussions. 

This work has been partially supported by the EPSRC (grant EP/E004008/1),
European Union through the FP6 Marie Curie RTN ENIGMA (Contract
number MRTN-CT-2004-5652) and ESF programme MISGAM.

\end{document}